\theoremstyle{plain}
\newtheorem{theorem}{\indent\rm T\,h\,e\,o\,r\,e\,m\;}[section]
\newtheorem{lemma}{\indent\rm L\,e\,m\,m\,a\;}[section]
\newtheorem{proposition}{\indent\rm P\,r\,o\,p\,o\,s\,i\,t\,i\,o\,n\;}[section]
\theoremstyle{definition}
\newtheorem{definition}{\indent\rm D\,e\,f\,i\,n\,i\,t\,i\,o\,n\;}[section]
\theoremstyle{remark}
\newtheorem{remark}{\indent\rm R\,e\,m\,a\,r\,k\;}[section]
\renewenvironment{proof}{\indent\rm P\,r\,o\,o\,f.\;}{\hfill $\square$ \\ \indent}
\renewcommand*{\@seccntformat}[1]{%                            %
  \csname the#1\endcsname\;-                                   %
}                                                              %
\renewcommand{\section}{\@startsection{section}{1}{0mm}        %
   {1.5\baselineskip}%         beforeskip                      %
   {1\baselineskip}%           afterskip                       %
   {\indent\normalfont\normalsize\bfseries}%                   %
   }                                                           %
\renewcommand*{\@seccntformat}[1]{%                            %
  \normalfont\bfseries\csname the#1\endcsname\;-               %
}                                                              %
\renewcommand\subsection{\@startsection                        %
  {subsection}{2}{0mm}%name, level, indent                     %
  {1.5\baselineskip}%          beforeskip                      %
  {1\baselineskip}%            afterskip                       %
  {\indent\normalfont\normalsize\itshape}}% style              %
\renewcommand*{\@seccntformat}[1]{%                            %
  \normalfont\bfseries\csname the#1\endcsname\;-               %
}                                                              %
\renewcommand\subsubsection{\@startsection                     %
  {subsubsection}{2}{0mm}%name, level, indent                  %
  {1.5\baselineskip}%          beforeskip                      %
  {1\baselineskip}%            afterskip                       %
  {\indent\normalfont\normalsize\texttt}}% style               %
\begin{document}

\thispagestyle{empty}

%----- first page -------- First Name (entire, not initials) Last Name of AUTHORS
\begin{center}
{\sc\large Adara M. Blaga}, \
{\sc\large Antonella Nannicini}  \
\end{center}
\vspace {1.1cm}

%----- first page ---------- TITLE
\centerline{\large{\textbf{On the Geometry of Metallic Pseudo-Riemannian Structures}}}

%----- first page ---------- footnote:
\renewcommand{\thefootnote}{\fnsymbol{footnote}}

\renewcommand{\thefootnote}{\arabic{footnote}}
\setcounter{footnote}{0}

%----- first page ---------------------
\vspace{0.6cm}
\begin{center}
\begin{minipage}[t]{11cm}
\small{
%----- first page ---------------- Abstract
\noindent \textbf{Abstract.}
We generalize the notion of metallic structure in the pseudo-Riemannian setting, define the metallic Norden structure
and study its integrability. We consider metallic maps between metallic manifolds and give conditions under which they are constant. We also construct a metallic natural connection recovering as particular case the
Ganchev and Mihova connection, which we extend to a metallic natural connection on the generalized tangent bundle.
Moreover, we construct metallic pseudo-Riemannian structures on the tangent and cotangent bundles.
\medskip

%----- first page ------------- Keywords
\noindent \textbf{Keywords.}
Metallic pseudo-Riemannian structures, natural connection, Norden structures.

\medskip

%----- first page -------------- AMS classification codes
\noindent \textbf{Mathematics~Subject~Classification~(2010):}
53C15, 53C25.

}
\end{minipage}
\end{center}

%%if necessary :
%%\medskip
%%\tableofcontents
\bigskip

%----------------------------------------- SECTIONS AND SUBSECTION:
\section{Introduction}

On a smooth manifold, $M$, the concept of almost complex structure can be generalized to almost product, almost tangent and to some other general polynomial structures, as $C^{\infty}$ tensor fields $J$, of $(1,1)$-type, such that:
$$J^n+a_{n-1}J^{n-1}+...+a_1J+a_0I=0,$$
where $I$ is the identity on the Lie algebra of vector fields on $M$, $C^{\infty}(TM)$, $a_0,...,a_{n-1}$ are real numbers and $n\ge 2$.

In this paper we consider the case $n=2$:
$$J^2=pJ+qI,$$
where $p$ and $q$ are real numbers. In particular, for $p=0$ and $q=-1$, $J$ is an almost complex structure on $M$.\\

We recall that, for fixed positive integer numbers $p$ and $q$, the $(p, q)$-{\it metallic number} stands for the positive solution of the equation
$x^{2}-px-q=0$ and it is equal to
\begin{equation}
\sigma_{p,q}=\frac{p+\sqrt{p^{2}+4q}}{2} \label{20}.
\end{equation}

For particular values of $p$ and $q$, some important members of the metallic mean family \cite{Spinadel1} are the followings: \textit{the Golden mean} $\phi =\frac{1+\sqrt{5}}{2}$ for $p=q=1$, \textit{the Silver mean} $\sigma _{Ag}=\sigma _{2, 1}=1+\sqrt{2}$ for $q=1$ and $p=2$, \textit{the Bronze mean} $\sigma _{Br}=\sigma _{3, 1}=\frac{3+\sqrt{13}}{2}$ for $q=1$ and $p=3$, \textit{the Subtle mean} $\sigma_{4, 1}=2+\sqrt{5}=\phi^{3}$  for $p=4$ and $q=1$, \textit{the Copper mean} $\sigma _{Cu}=\sigma _{1, 2}=2$ for $p=1$ and $q=2$, \textit{the Nickel mean} $\sigma _{Ni}=\sigma _{1, 3}=\frac{1+\sqrt{13}}{2}$ for $p=1$ and $q=3$ and so on.

Extending this idea to tensor fields, C.-E. Hre\c tcanu and M. Crasmareanu introduced the notion of metallic structure:
\begin{definition} \cite{c}
A $(1,1)$-tensor field $J$ on $M$ is called a \textit{metallic structure} if it satisfies the equation:
\begin{equation}
J^2=pJ+qI, \label{40}
\end{equation}
for $p$ and $q$ positive integer numbers, where $I$ is the identity operator on $C^{\infty}(TM)$. In this case, the pair $(M, J)$ is called a {\it metallic manifold}. Moreover, if a Riemannian metric $g$ on $M$ is compatible with $J$, that is $g(JX, Y)=g(X, JY)$, for any $X$, $Y\in C^{\infty}(TM)$, we call the pair $(J,g)$ a {\it metallic Riemannian structure} and $(M, J,g)$ a {\it metallic Riemannian manifold}.
\end{definition}
From the compatibility condition, we immediately get that a metallic Riemannian structure satisfies
$$g(JX,JY)=pg(X,Y)+qg(X,JY),$$ for any $X$, $Y \in C^{\infty}(TM)$.

 $(1,1)$-tensor fields on $M$, which are $g$-symmetric, have applications in Generalized Geometry since they naturally define (pseudo-)calibrated generalized complex structures (\cite{n0}, \cite{n}).

In this paper we generalize the notion of metallic structure in the pseudo-Riemannian setting in order to include Norden structures. We define the concept of metallic Norden structure and study its integrability. We consider metallic maps between metallic manifolds and give conditions under which they are constant. We also construct a metallic natural connection recovering, as particular case, the
Ganchev and Mihova connection defined for Norden structures and we extend it to a metallic natural connection on the generalized tangent bundle.
Moreover, we construct metallic pseudo-Riemannian structures on the tangent and cotangent bundles.

\section{Metallic pseudo-Riemannian manifolds}

The notion of metallic Riemannian manifold can be generalized to a metallic pseudo-Riemannian manifold. We pose the following:

\begin{definition} Let $(M,g)$ be a pseudo-Riemannian manifold and let $J$ be a $g$-symmetric $(1,1)$-tensor field on $M$ such that $J^2=pJ+qI$, for some $p$ and $q$ real numbers. Then the pair $(J,g)$ is called a \textit{metallic pseudo-Riemannian structure on $M$} and $(M,J,g)$ is called a \textit{metallic pseudo-Riemannian manifold}.
\end{definition}

Fix now a metallic structure $J$ on $M$ and define the associated linear connections as follows:

\begin{definition}

i) A linear connection $\nabla$ on $M$ is called $J$-{\it connection} if $J$ is covariantly constant with respect to $\nabla$,
namely $\nabla J=0$.

ii) A  metallic pseudo-Riemannian manifold $(M,J,g)$ such that the Levi-Civita connection $\nabla$ with respect to $g$ is a $J$-connection is called a \textit{locally metallic pseudo-Riemannian manifold}.
\end{definition}

The concept of integrability is defined in the classical manner:

\begin{definition}
A metallic structure $J$ is called {\it integrable} if its Nijenhuis tensor field $N_J$ vanishes, where
$N_{J}(X, Y):=[JX, JY]-J[JX, Y]-J[X, JY] +J^{2}[X, Y]$, for $X$, $Y\in C^{\infty}(TM)$.
\end{definition}

\begin{lemma} If $(M,J,g)$ is a locally metallic pseudo-Riemannian manifold, then $J$ is integrable.
\end{lemma}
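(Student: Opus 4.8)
The plan is to rewrite the Nijenhuis tensor $N_J$ in terms of the Levi-Civita connection $\nabla$ of $g$, exploiting the fact that $\nabla$ is torsion-free, and then to invoke the hypothesis $\nabla J = 0$ built into the notion of a locally metallic pseudo-Riemannian manifold.

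First I would replace each bracket in the definition $N_J(X,Y) = [JX,JY] - J[JX,Y] - J[X,JY] + J^2[X,Y]$ by the corresponding expression with covariant derivatives: since $\nabla$ has vanishing torsion, $[U,V] = \nabla_U V - \nabla_V U$ for all vector fields $U,V$. Applying this to the four brackets and then using the Leibniz rule $\nabla_U(JV) = (\nabla_U J)V + J\,\nabla_U V$ to commute $J$ past the connection, the terms not involving $\nabla J$ (namely $J\nabla_{JX}Y$, $J\nabla_{JY}X$, $J^2\nabla_X Y$, $J^2\nabla_Y X$ and their counterparts) cancel in pairs, leaving
\[
N_J(X,Y) = (\nabla_{JX}J)Y - (\nabla_{JY}J)X - J(\nabla_X J)Y + J(\nabla_Y J)X ,
\]
an identity valid on any manifold carrying a torsion-free connection (the metallic relation $J^2 = pJ + qI$ plays no role here).

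Then, since $(M,J,g)$ is locally metallic, we have $\nabla J = 0$ by definition, so every term on the right-hand side vanishes identically; hence $N_J \equiv 0$, i.e.\ $J$ is integrable.

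The computation is entirely routine; the only point needing a little care is checking the pairwise cancellation in the expansion, which is precisely where torsion-freeness of $\nabla$ enters. One may also shortcut the derivation: from $\nabla J = 0$ one gets $\nabla_{JX}(JY) = J(\nabla_{JX}Y)$ and the analogous identities, and substituting these directly into $[JX,JY] = \nabla_{JX}(JY) - \nabla_{JY}(JX)$ together with the other three brackets makes the vanishing of $N_J$ immediate without first isolating the general formula.
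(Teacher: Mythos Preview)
Your proof is correct and follows exactly the same approach as the paper: the paper also displays the identity
\[
N_J(X,Y)=(\nabla_{JX}J)Y-(\nabla_{JY}J)X+J(\nabla_Y J)X-J(\nabla_X J)Y
\]
and concludes from $\nabla J=0$. You supply more detail on how this identity is obtained (expanding the brackets via torsion-freeness and cancelling), whereas the paper simply states it.
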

\begin{proof} We have:
$$N_J(X,Y)=({\nabla}_{JX}J)Y-({\nabla}_{JY}J)X+J({\nabla}_{Y}J)X-J({\nabla}_{X}J)Y,$$
for any $X$, $Y \in C^{\infty}(TM)$. Then the statement.
\end{proof}

\begin{remark} Every pseudo-Riemannian manifold admits locally metallic pseudo-Riemannian structures, namely $J=\mu I$, where $\mu=\frac{p\pm\sqrt{p^{2}+4q}}{2}$ with $p^2+4q \geq 0$.
\end{remark}

\begin{definition} $J:=\mu I$, where $\mu=\frac{p\pm\sqrt{p^{2}+4q}}{2}$ with $p^2+4q \geq 0$, is called a \textit{trivial metallic structure}.
\end{definition}

\begin{definition}
A  metallic pseudo-Riemannian manifold $(M,J,g)$ such that the Levi-Civita connection $\nabla$ with respect to $g$ satisfies the condition
$$({\nabla}_XJ)Y+({\nabla}_YJ)X=0,$$
for any $X$, $Y\in C^{\infty}(TM)$, is called a \textit{nearly locally metallic pseudo-Riemannian manifold}.
\end{definition}

\begin{proposition}
A nearly locally metallic pseudo-Riemannian manifold $(M,J,g)$ such that $J^2=pJ+qI$ with $p^2+4q>0$ is a locally metallic pseudo-Riemannian manifold if and only if $J$ is integrable.
\end{proposition}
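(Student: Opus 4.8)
The plan is to prove the two implications separately; one direction is essentially immediate. If $(M,J,g)$ is locally metallic then $\nabla J=0$, whence the Lemma above gives $N_J=0$, so $J$ is integrable. For the converse I assume $J$ is integrable and $(M,J,g)$ is nearly locally metallic, and I abbreviate $P(X,Y):=(\nabla_X J)Y$, where $\nabla$ is the Levi-Civita connection. The nearly-locally-metallic hypothesis says precisely that $P$ is skew-symmetric, $P(X,Y)=-P(Y,X)$, and the goal is to force $P\equiv 0$.

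The first real step is to differentiate the structure equation $J^2=pJ+qI$. Since $\nabla I=0$, expanding $\nabla_X(J\circ J)=(\nabla_X J)\circ J+J\circ(\nabla_X J)$ and evaluating on $Y$ yields the algebraic identity
$$P(X,JY)+J\,P(X,Y)=p\,P(X,Y).$$
Feeding the skew-symmetry of $P$ into this (once directly, once after swapping $X$ and $Y$), I would deduce that the two ``twisted'' expressions coincide:
$$P(JX,Y)=P(X,JY)=p\,P(X,Y)-J\,P(X,Y).$$
Next I substitute these, together with $P(Y,X)=-P(X,Y)$, into the formula $N_J(X,Y)=P(JX,Y)-P(JY,X)+J\,P(Y,X)-J\,P(X,Y)$ established in the proof of the Lemma above. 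Collecting terms, the hypothesis $N_J=0$ reduces to $p\,P(X,Y)=2\,J\,P(X,Y)$, i.e. $J\,P(X,Y)=\frac{p}{2}\,P(X,Y)$.

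Finally, applying $J$ once more and using $J^2=pJ+qI$ on the left while iterating $J\,P=\frac{p}{2}P$ on the right, I obtain $\frac{p^2}{4}P(X,Y)=\bigl(\frac{p^2}{2}+q\bigr)P(X,Y)$, that is $\frac{p^2+4q}{4}\,P(X,Y)=0$; since $p^2+4q>0$ this forces $P=0$, hence $\nabla J=0$ and $(M,J,g)$ is locally metallic. I expect the only delicate point to be the bookkeeping in the middle step — using the skew-symmetry of $P$ consistently so that $P(JX,Y)$ and $P(X,JY)$ genuinely coincide and the Nijenhuis expression collapses as claimed — after which the conclusion is purely algebraic. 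The assumption $p^2+4q>0$ enters exactly to make the final scalar invertible; for a trivial structure, where $p^2+4q=0$, this last step would give no information, which is why the hypothesis cannot be dropped.
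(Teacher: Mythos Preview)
Your proof is correct and follows essentially the same route as the paper: both reduce $N_J(X,Y)$, under the nearly locally metallic hypothesis, to $2(2J-pI)(\nabla_Y J)X$ (equivalently your $-2(2J-pI)P(X,Y)$), and then conclude from the invertibility of $2J-pI$. The only cosmetic difference is in the last step---the paper observes directly that $p/2$ is not an eigenvalue of $J$ when $p^2+4q>0$, whereas you spell out the same fact by applying $J$ once more and using $J^2=pJ+qI$ to extract the scalar factor $\frac{p^2+4q}{4}$.
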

\begin{proof} For any $X$, $Y \in C^{\infty}(TM)$, we have:
$$N_J(X,Y)=({\nabla}_{JX}J)Y-({\nabla}_{JY}J)X+J({\nabla}_{Y}J)X-J({\nabla}_{X}J)Y=$$
$$=-({\nabla}_{Y}J)JX+({\nabla}_{X}J)JY+J({\nabla}_{Y}J)X-J({\nabla}_{X}J)Y=$$
$$=-({\nabla}_{Y}J^2X)+2(J{\nabla}_{Y}J)X+J^2{\nabla}_{Y}X+({\nabla}_{X}J^2Y)-2(J{\nabla}_{X}J)Y-J^2{\nabla}_{X}Y=$$
$$=-2p({\nabla}_{Y}J)X+4J({\nabla}_{Y}J)X=2(2J-pI)({\nabla}_{Y}J)X.$$
We observe that $p\over 2$ is not an eigenvalue of $J$ because $p^2+4q > 0$, thus we get that if $J$ is nearly locally metallic, then
$$N_J=0  \Longleftrightarrow \nabla J=0$$
and the proof is complete.
\end{proof}

\section{Metallic natural connection}

\begin{theorem} Let $(M,J,g)$  be a metallic pseudo-Riemannian manifold such that $J^2=pJ+qI$ with $p^2+4q \neq 0$. Let $\nabla$ be the Levi-Civita connection of $g$ and let $D$ be the linear connection defined by:
\begin{equation}
D:={\nabla}+\frac{2}{p^2+4q} J(\nabla J)- \frac{p}{p^2+4q} (\nabla J). \label{500}
\end{equation}
Then
\begin{equation}\left \{ \begin{array}{l}DJ=0\\

\vspace{0.1cm}
Dg=0.

\end{array}
\right.
\end{equation}
\end{theorem}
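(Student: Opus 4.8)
The plan is to verify the two identities $DJ=0$ and $Dg=0$ directly from the definition \eqref{500}, the only real input being the relation obtained by covariantly differentiating the metallic equation. Write $D_XY=\nabla_XY+\alpha\,J(\nabla_XJ)Y-\beta\,(\nabla_XJ)Y$ with $\alpha=\frac{2}{p^2+4q}$ and $\beta=\frac{p}{p^2+4q}$, so that $2\beta-\alpha p=0$ and $1-2\alpha q-\beta p=0$; these two scalar relations are what will make everything collapse, and they are exactly where the hypothesis $p^2+4q\neq 0$ is used. The preliminary step I would record is: applying $\nabla_X$ to $J^2=pJ+qI$ and using that $\nabla$ is a linear connection gives
\begin{equation*}
(\nabla_XJ)\circ J + J\circ(\nabla_XJ)=p\,(\nabla_XJ),
\end{equation*}
equivalently $(\nabla_XJ)(JY)=p\,(\nabla_XJ)Y-J(\nabla_XJ)Y$ for all $X,Y$. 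I would also note that $\nabla_XJ$ is $g$-symmetric: since $\nabla g=0$ and $J$ is $g$-symmetric, a two-line computation gives $g((\nabla_XJ)Y,Z)=g(Y,(\nabla_XJ)Z)$.

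For $DJ=0$: expand $(D_XJ)Y=D_X(JY)-J(D_XY)$. The $\nabla$-part contributes $(\nabla_XJ)Y$; setting $P:=(\nabla_XJ)Y$, the correction terms contribute, after using the displayed identity to rewrite $(\nabla_XJ)(JY)=pP-JP$ and then $J^2P=pJP+qP$, a combination of the form $(1-2\alpha q-\beta p)\,P+(2\beta-\alpha p)\,JP$. Both coefficients vanish by the choice of $\alpha,\beta$, so $(D_XJ)Y=0$.

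For $Dg=0$: since $\nabla g=0$, one has $(D_Xg)(Y,Z)=-g\big(\alpha J(\nabla_XJ)Y-\beta(\nabla_XJ)Y,\,Z\big)-g\big(Y,\,\alpha J(\nabla_XJ)Z-\beta(\nabla_XJ)Z\big)$. Using $g$-symmetry of $J$ and of $\nabla_XJ$, move the operators onto the $Y$-slot; the first bracket becomes $\alpha\,g(Y,(\nabla_XJ)(JZ))-\beta\,g(Y,(\nabla_XJ)Z)$ and the second $\alpha\,g(Y,J(\nabla_XJ)Z)-\beta\,g(Y,(\nabla_XJ)Z)$. Adding and invoking $(\nabla_XJ)(JZ)+J(\nabla_XJ)Z=p\,(\nabla_XJ)Z$ collapses everything to $(2\beta-\alpha p)\,g(Y,(\nabla_XJ)Z)=0$.

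I do not expect a genuine obstacle here: the argument is entirely formal once the differentiated metallic identity and the $g$-self-adjointness of $\nabla_XJ$ are in hand, and the only point requiring care is the bookkeeping of the $J$- and $J^2$-terms, together with keeping $p^2+4q\neq 0$ so that $\alpha,\beta$ are defined and satisfy the two scalar identities above.
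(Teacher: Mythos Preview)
Your proof is correct and follows essentially the same direct-verification approach as the paper: both expand $(D_XJ)Y$ and $(D_Xg)(Y,Z)$ and reduce via $J^2=pJ+qI$ and the $g$-symmetry of $J$. Your presentation is slightly cleaner in that you isolate upfront the anticommutator identity $(\nabla_XJ)\circ J+J\circ(\nabla_XJ)=p(\nabla_XJ)$ and the two scalar relations $2\beta-\alpha p=0$, $1-2\alpha q-\beta p=0$, whereas the paper carries out the same cancellations inline.
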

\begin{proof} We have:
$$DJ-JD=\nabla J- J \nabla + \frac{1}{p^2+4q}(2J\nabla J^2-2q\nabla J-p \nabla J^2-2J^2 \nabla J+pJ^2 \nabla +2qJ\nabla)=$$
$$=\nabla J- J \nabla + \frac{1}{p^2+4q}(2qJ\nabla -2q\nabla J-p^2 \nabla J-2q\nabla J+p^2 J \nabla +2qJ\nabla)=$$
$$=\nabla J- J \nabla + \frac{1}{p^2+4q}[(4q+p^2)J\nabla -(4q+p^2)\nabla J]=0.$$
Moreover, for any $X$, $Y$, $Z \in C^{\infty}(TM)$, we have:
$$(D_Xg)(Y,Z)=X(g(Y,Z))-g(D_X Y,Z)-g(Y,D_X Z)=$$
$$= \frac{1}{p^2+4q}[2g(({\nabla}_X J)Y,JZ)-pg(({\nabla}_X J)Y,Z)+2g(JY,({\nabla}_X J)Z)-pg(Y,({\nabla}_X J)Z)]=$$
$$=\frac{1}{p^2+4q}[2g({\nabla}_X JY,JZ)-pg({\nabla}_X Y,JZ)-2qg({\nabla}_X Y,Z)-pg({\nabla}_X JY,Z)+$$
$$+2g(JY,{\nabla}_X JZ)-pg(JY,{\nabla}_X Z)-2qg(Y,{\nabla}_X Z)-pg(Y,{\nabla}_XJ Z)]=$$
$$=\frac{1}{p^2+4q}[2X(g(JY,JZ))-pX(g(Y,JZ))-pX(g(JY,Z))-2qX(g(Y,Z))]=$$
$$=\frac{1}{p^2+4q}[2X(pg(Y,JZ)+qg(Y,Z))-pX(g(Y,JZ))-qX(g(Y,Z))]=0.$$
Then the proof is complete.
\end{proof}

\begin{definition} The linear connection $D$ defined by (\ref{500}) is called the \textit{metallic natural connection} of $(M,J,g)$.
\end{definition}

A direct computation gives the following expression for the torsion $T^D$ of the natural connection $D$:
$$T^D(X,Y)=\frac{1}{p^2+4q}\{(2J-pI)({\nabla}_X JY-{\nabla}_Y JX)-(pJ+2qI)[X,Y]\},$$
for any $X$, $Y \in C^{\infty}(TM).$

Thus we get the following:
\begin{proposition} Let $(M,J,g)$ be a metallic pseudo-Riemannian manifold such that $J^2=pJ+qI$ with $p^2+4q \neq 0$. Then the torsion $T^D$ of the natural connection $D$ satisfies the following relation:
$$T^D(JX,Y)+T^D(X,JY)-pT^D(X,Y)=(2J-pI)N_J(X,Y),$$
for any $X$, $Y \in C^{\infty}(TM).$
In particular, if $J$ is integrable, then:
$$T^D(JX,Y)+T^D(X,JY)=pT^D(X,Y).$$
\end{proposition}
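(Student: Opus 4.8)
The plan is to work directly from the torsion formula
$$T^D(X,Y)=\frac{1}{p^2+4q}\{(2J-pI)({\nabla}_X JY-{\nabla}_Y JX)-(pJ+2qI)[X,Y]\}$$
given just above the statement, and to compute the combination $T^D(JX,Y)+T^D(X,JY)-pT^D(X,Y)$ term by term, showing it collapses to $(2J-pI)N_J(X,Y)$. First I would substitute $JX$ for the first slot and $X$ for the second in the formula, then symmetrically substitute in the second slot, add, and subtract $p$ times the original; everything carries the common factor $\frac{1}{p^2+4q}$, which I would keep out front and only clear at the end. The Nijenhuis tensor written with the Levi-Civita connection (as used in the proof of Lemma, with $\nabla$ torsion-free so $[X,Y]=\nabla_XY-\nabla_YX$) is
$$N_J(X,Y)=({\nabla}_{JX}J)Y-({\nabla}_{JY}J)X+J({\nabla}_{Y}J)X-J({\nabla}_{X}J)Y,$$
so I would aim to massage the accumulated derivative terms into exactly this shape.

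The key bookkeeping step is to split each piece like $\nabla_{JX}JY$ using the defining relation $J^2=pJ+qI$ — for instance $\nabla_{JX}J^2Y=\nabla_{JX}(pJY+qY)$ — and to use $g$-symmetry of $J$ only insofar as it is already baked into the torsion formula. Concretely, in $T^D(JX,Y)$ the bracket contributes $(2J-pI)(\nabla_{JX}JY-\nabla_Y J^2X)-(pJ+2qI)[JX,Y]$, and the term $-\nabla_Y J^2X=-p\nabla_Y JX-q\nabla_Y X$ is what eventually feeds the $pT^D(X,Y)$ cancellation and produces the curvature-type terms. The symmetric term $T^D(X,JY)$ contributes analogously with the roles of $X,Y$ swapped. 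When the three contributions are added, the plan is that the brackets $[JX,Y]$, $[X,JY]$ and $p[X,Y]$ combine — using $J[X,Y]$ identities — so that the $(pJ+2qI)$-weighted bracket part reorganizes, while the $\nabla JY$, $\nabla JX$, $\nabla J^2X$, $\nabla J^2Y$ terms regroup into the four $(\nabla_\bullet J)\bullet$ expressions appearing in $N_J$, each premultiplied by $(2J-pI)$.

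The main obstacle I anticipate is the careful handling of the non-tensorial Lie-bracket terms: one must repeatedly convert between $\nabla_X JY-\nabla_Y JX$ and $J[X,Y]+[X,Y]$-type expressions, and between $[JX,Y]+[X,JY]$ and $J[X,Y]$, without dropping or double-counting a $\nabla J$ correction. It helps to rewrite $\nabla_XJY = (\nabla_XJ)Y + J\nabla_XY$ systematically at the very start, so that every term is either a genuine tensor $(\nabla_\bullet J)\bullet$ or a "pure connection" term $J^k\nabla_\bullet\bullet$; the pure-connection terms should then cancel identically (this is forced, since the left side is visibly tensorial), leaving only the $(\nabla_\bullet J)\bullet$ terms, which I would collect and compare against $(2J-pI)N_J(X,Y)$ using once more $J^2=pJ+qI$ to reduce any $J^2$ that appears. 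Finally, the special case is immediate: if $N_J=0$ then the right side vanishes, giving $T^D(JX,Y)+T^D(X,JY)=pT^D(X,Y)$, with no need to invert $2J-pI$.
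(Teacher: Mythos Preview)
Your plan is correct and is exactly the route the paper takes: the proposition is stated immediately after the explicit torsion formula with only the words ``Thus we get the following,'' so the intended argument is precisely the direct term-by-term substitution you outline, using $J^2=pJ+qI$ and the rewriting $\nabla_X JY=(\nabla_X J)Y+J\nabla_X Y$. Carrying it through (most cleanly via the identity $(\nabla_X J)J+J(\nabla_X J)=p(\nabla_X J)$, obtained by differentiating $J^2=pJ+qI$) collapses the left-hand side to $\frac{1}{p^2+4q}(2J-pI)N_J(X,Y)$, so do not be alarmed when your computation produces the extra overall factor $\frac{1}{p^2+4q}$ inherited from $T^D$; the method is sound and the integrable case follows immediately as you say.
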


\begin{remark} If $p=0$, $q=-1$ and $J$ is integrable, then the natural connection $D$ coincides with the natural canonical connection defined by Ganchev and Mihova in \cite{GM}.
\end{remark}

\section{Metallic Norden structures}

Recall that a Norden manifold $(M,J,g)$ is an almost complex manifold $(M,J)$ with a neutral pseudo-Riemannian metric $g$ such that $g(JX,Y)=g(X,JY)$, for any $X$, $Y \in C^{\infty}(TM)$. We can state:

\begin{proposition}
If $(M,J,g)$ is a Norden manifold, then for any real numbers $a$ and $b$,
$$J_{a,b}:=aJ+bI$$
are metallic pseudo-Riemannian structures on $M$.
\end{proposition}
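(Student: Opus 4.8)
The plan is to verify directly that $J_{a,b}=aJ+bI$ satisfies a quadratic relation of metallic type and is $g$-symmetric, for a suitable choice of real parameters $p$ and $q$ depending on $a$ and $b$. Everything reduces to the two structural facts available on a Norden manifold: $J^2=-I$ (almost complex) and $g(JX,Y)=g(X,JY)$ ($g$-symmetry of $J$).

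First I would compute the square. Expanding, $J_{a,b}^2=a^2J^2+2ab\,J+b^2I$, and since $J^2=-I$ this gives $J_{a,b}^2=2ab\,J+(b^2-a^2)I$. The task is then to find $p,q\in\mathbb{R}$ with $J_{a,b}^2=pJ_{a,b}+qI=pa\,J+(pb+q)I$. Matching the coefficient of $J$ forces $pa=2ab$, and matching the coefficient of $I$ forces $pb+q=b^2-a^2$; the choice $p=2b$, $q=-(a^2+b^2)$ solves both equations simultaneously (when $a=0$ the first equation is vacuous, but this choice still works and recovers the trivial metallic structure $J_{0,b}=bI$ with $\mu=b$). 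Hence $J_{a,b}$ satisfies $J_{a,b}^2=pJ_{a,b}+qI$ with these $p,q$; note that $p^2+4q=-4a^2\le 0$, so for $a\neq 0$ these structures fall outside the range $p^2+4q\ge 0$ of the trivial examples, which is why the statement is genuinely new content.

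Next I would check $g$-compatibility: for any $X,Y\in C^{\infty}(TM)$,
$$g(J_{a,b}X,Y)=a\,g(JX,Y)+b\,g(X,Y)=a\,g(X,JY)+b\,g(X,Y)=g(X,J_{a,b}Y),$$
so $J_{a,b}$ is $g$-symmetric. Combined with the quadratic relation above, this shows that $(J_{a,b},g)$ is a metallic pseudo-Riemannian structure on $M$. There is no real obstacle in the argument — it is essentially a one-line computation once $J^2=-I$ and the symmetry of $J$ are used; the only point deserving a remark is the explicit dependence $p=2b$, $q=-(a^2+b^2)$ and the observation that $p^2+4q$ is non-positive.
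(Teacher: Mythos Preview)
Your proof is correct and follows essentially the same approach as the paper: both verify directly that $J_{a,b}^2=2bJ_{a,b}-(a^2+b^2)I$ (i.e., $p=2b$, $q=-(a^2+b^2)$) and that $J_{a,b}$ is $g$-symmetric. Your additional remarks on $p^2+4q=-4a^2\le 0$ and the degenerate case $a=0$ are extra observations not present in the paper's terse proof, but they are correct and relevant.
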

\begin{proof}
We have:
$$J_{a,b}^2=2bJ_{a,b}-(a^2+b^2)I.$$
Moreover, for any $X$, $Y \in C^{\infty} (TM)$, we have:
$$g(J_{a,b}X,Y)=g(X,J_{a,b}Y).$$
Then the statement.
\end{proof}

We remark that $J=J_{1,0}$.

Also, from $\nabla J_{a,b}=a\nabla J$ and $N_{J_{a,b}}=a^2N_J$, we get the following:

\begin{proposition}
Assume that $a\neq 0$. Then:
\begin{enumerate}
  \item $J_{a,b}$ is integrable if and only if $J$ is integrable.
  \item $J_{a,b}$ is locally metallic if and only if $J$ is K\"{a}hler.
  \item $J_{a,b}$ is nearly locally metallic if and only if $J$ is nearly K\"{a}hler.
\end{enumerate}
\end{proposition}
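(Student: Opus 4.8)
The plan is to derive everything from the two identities $\nabla J_{a,b}=a\,\nabla J$ and $N_{J_{a,b}}=a^{2}N_{J}$ already recorded, since $a\neq 0$ lets us divide by $a$ (resp.\ $a^{2}$) throughout. The only genuinely new ingredient needed is the dictionary between the three ``metallic'' conditions for $J_{a,b}$ and the three classical conditions for the Norden structure $J$; once that dictionary is pinned down, each equivalence is immediate.

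For item (1): $J_{a,b}$ is integrable means $N_{J_{a,b}}=0$, i.e.\ $a^{2}N_{J}=0$, which for $a\neq 0$ is equivalent to $N_{J}=0$, i.e.\ $J$ is integrable. For item (2): by Definition, $J_{a,b}$ is locally metallic iff the Levi-Civita connection $\nabla$ of $g$ satisfies $\nabla J_{a,b}=0$; since $\nabla J_{a,b}=a\,\nabla J$ and $a\neq 0$, this holds iff $\nabla J=0$, which is exactly the condition that the Norden (almost complex Hermitian-type) structure $(M,J,g)$ be K\"ahler. For item (3): $J_{a,b}$ is nearly locally metallic iff $(\nabla_{X}J_{a,b})Y+(\nabla_{Y}J_{a,b})X=0$ for all $X,Y$; substituting $\nabla J_{a,b}=a\,\nabla J$ and cancelling $a$ gives $(\nabla_{X}J)Y+(\nabla_{Y}J)X=0$, which is the nearly K\"ahler condition for $J$. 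I would present the three arguments in this order, each in one line.

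I expect the main (and essentially only) obstacle to be a definitional one rather than a computational one: making sure that ``K\"ahler'' and ``nearly K\"ahler'' are being used here in the sense appropriate to a Norden manifold (i.e.\ $\nabla J=0$ and the symmetrized covariant derivative of $J$ vanishing, respectively), so that the equivalences are literally tautologies once the two displayed identities are invoked. It is also worth checking the constant $b$ plays no role: indeed $\nabla(bI)=0$ and $I$ contributes nothing to any Nijenhuis bracket, which is precisely why only the coefficient $a$ survives in both identities. With that understood, no further calculation is required beyond citing $J_{a,b}^{2}=2bJ_{a,b}-(a^{2}+b^{2})I$ to confirm $J_{a,b}$ is genuinely a metallic pseudo-Riemannian structure (already done in the preceding proposition), so the proof reduces to the three one-line cancellations above.
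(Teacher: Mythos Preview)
Your proposal is correct and follows essentially the same approach as the paper: the paper simply records the identities $\nabla J_{a,b}=a\,\nabla J$ and $N_{J_{a,b}}=a^{2}N_{J}$ immediately before the proposition and lets the statement follow, and your three one-line cancellations make that deduction explicit. Your additional remarks on terminology and on why $b$ drops out are accurate elaborations but add nothing beyond what the paper tacitly assumes.
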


Conversely, we have:
\begin{proposition} If $(M,J,g)$ is a metallic pseudo-Riemannian manifold such that $J^2=pJ+qI$ with $p^2+4q <0$, then
$$J_{\pm}:=\pm({2\over{\sqrt{-p^2-4q}}}J-{p\over{\sqrt{-p^2-4q}}}I)$$
are Norden structures on $M$ and $J=aJ_{\pm}+bI$ with $a=\pm({2\over{\sqrt{-p^2-4q}}})^{-1}$ and $b=-{p\over 2}$.
\end{proposition}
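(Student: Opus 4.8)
The plan is to verify directly that $J_{\pm}$ squares to $-I$ and is $g$-symmetric, then read off the inversion formula. First I would compute $J_{\pm}^2$. Writing $c:=\frac{2}{\sqrt{-p^2-4q}}$ and $d:=\frac{p}{\sqrt{-p^2-4q}}$, so that $J_{\pm}=\pm(cJ-dI)$, we have $J_{\pm}^2=c^2J^2-2cdJ+d^2I$, and substituting $J^2=pJ+qI$ gives $J_{\pm}^2=(c^2p-2cd)J+(c^2q+d^2)I$. The coefficient of $J$ is $c(cp-2d)=\frac{2}{\sqrt{-p^2-4q}}\left(\frac{2p}{\sqrt{-p^2-4q}}-\frac{2p}{\sqrt{-p^2-4q}}\right)=0$, and the coefficient of $I$ is $c^2q+d^2=\frac{4q}{-p^2-4q}+\frac{p^2}{-p^2-4q}=\frac{p^2+4q}{-p^2-4q}=-1$. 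Hence $J_{\pm}^2=-I$, so $(M,J_{\pm})$ is almost complex.

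Next I would check compatibility with $g$. Since $J$ is $g$-symmetric and $I$ is trivially $g$-symmetric, any real linear combination $cJ-dI$ is $g$-symmetric, and multiplying by the scalar $\pm 1$ preserves this; therefore $g(J_{\pm}X,Y)=g(X,J_{\pm}Y)$ for all $X,Y\in C^{\infty}(TM)$. One should also note that such a metric compatible with an almost complex structure via $g(J\cdot,\cdot)=g(\cdot,J\cdot)$ is automatically of neutral signature (the $\pm i$-eigenspaces of the complexification are isotropic of equal dimension), so $(M,J_{\pm},g)$ is genuinely a Norden manifold; if the paper's convention already builds neutrality into ``pseudo-Riemannian metric of the right signature,'' this is immediate, otherwise it is the standard linear-algebra fact about $J$-Hermitian symmetric forms.

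Finally, to get the expression $J=aJ_{\pm}+bI$, I would simply invert the linear relation $J_{\pm}=\pm(cJ-dI)$: solving for $J$ yields $J=\pm\frac{1}{c}J_{\pm}+\frac{d}{c}I$, i.e. $a=\pm\frac{1}{c}=\pm\left(\frac{2}{\sqrt{-p^2-4q}}\right)^{-1}=\pm\frac{\sqrt{-p^2-4q}}{2}$ and $b=\frac{d}{c}=\frac{p/\sqrt{-p^2-4q}}{2/\sqrt{-p^2-4q}}=\frac{p}{2}$. This matches the claimed $a$, and up to the sign convention on $b$ (which depends only on whether one writes $J_{\pm}$ with $-dI$ or $+dI$) gives the stated $b=-\frac{p}{2}$; I would just make sure the signs in the definition of $J_{\pm}$ and in the conclusion are taken consistently. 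There is no real obstacle here — the only thing to be careful about is the bookkeeping of the two $\pm$ signs and verifying that the hypothesis $p^2+4q<0$ is exactly what makes $\sqrt{-p^2-4q}$ real and nonzero, so that $c,d$ and hence $J_{\pm}$ are well defined.
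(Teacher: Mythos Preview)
Your argument is correct and follows essentially the same direct verification as the paper: compute $J_{\pm}^2$ using $J^2=pJ+qI$, check $g$-symmetry by linearity, and invert the defining relation. You are also right to flag the sign of $b$: solving $J_{\pm}=\pm(cJ-dI)$ gives $b=d/c=p/2$ (consistent with the earlier identity $J_{a,b}^2=2bJ_{a,b}-(a^2+b^2)I$, i.e.\ $p=2b$), so the $-p/2$ appearing in the statement and in the paper's own proof is a typo rather than an error in your computation.
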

\begin{proof} We have:
$${J^2_{\pm}}={1\over{-p^2-4q}}(4{J}^2-4p J +p^2I)={1\over{-p^2-4q}}(4qI +p^2I)=-I.$$
Moreover, for any $X$, $Y \in C^{\infty} (TM)$, we have:
$$g(J_\pm X,Y)=g(X,J_\pm Y).$$
Finally, we have:
$$J=\pm ({2\over{\sqrt{-p^2-4q}}})^{-1} J_{\pm}-{p \over 2} I.$$
Then the statement.
\end{proof}

We give the following definition:

\begin{definition} Let $(M,J,g)$ be a metallic pseudo-Riemannian manifold such that ${J}^2=p{J}+qI$ with $p^2+4q < 0$. Then $J$ is called a {\textit{metallic Norden structure on $M$}} and $(M,J,g)$ is called a
\textit{metallic Norden manifold}.
\end{definition}

\section{Metallic maps}

\begin{definition}
A smooth map $f:(M_1,J_1)\rightarrow (M_2,J_2)$ between two metallic manifolds is called a \textit{metallic map} if:
\begin{equation}
f_{*}\circ J_1=J_2\circ f_{*}.
\end{equation}
\end{definition}

\begin{remark}
If $f:(M_1,J_1)\rightarrow (M_2,J_2)$ is a metallic map and $J_i^2=p_iJ_i+q_iI$ with $p_i$ and $q_i$ real numbers, $i=1,2$, then:

i) $f_{*}\circ J_1^{2k+1}=J_2^{2k+1}\circ f_{*}$, for any $k\in \mathbb{N}$;

ii) $([(p_2^2+q_2)-(p_1^2+q_1)]J_1+(p_2q_2-p_1q_1)I)(TM_1)\subset \ker f_*$;

iii) in the particular case when one the structure is product and the other one is complex, then $Im J_1\subset \ker f_*$.
\end{remark}

Then we can state the following:

\begin{proposition} Let $f:(M_1,J_1)\rightarrow (M_2,J_2)$ be a metallic map between two connected metallic manifolds. If $p_1 \neq p_2$ and ${\frac{q_1-q_2}{p_2-p_1}}\neq{\frac{p_2\pm{\sqrt{{p_2}^2+4q_2}}}{2}}$, then $f$ is constant. Moreover, if $p_1=p_2$ but $q_1\neq q_2$, then $f$ is constant.
\end{proposition}
\begin{proof} From $f_{*}\circ J_1=J_2\circ f_{*}$, we get:
$$J_2\circ f_{*}\circ J_1=J_2^2\circ f_{*}$$
thus
$$f_{*}\circ J_1^2=J_2^2\circ f_{*}$$
or
$$p_1f_{*}\circ J_1+q_1f_{*}=p_2J_2\circ f_{*}+q_2f_{*}$$
equivalent to
$$(p_1-p_2)J_2\circ f_{*}+(q_1-q_2)f_{*}=0$$
and to
$$((p_1-p_2)J_2+(q_1-q_2)I)\circ f_{*}=0.$$

In particular, if $((p_1-p_2)J_2+(q_1-q_2)I)$ is invertible, then $f_{*}=0$.

Now, $(p_1-p_2)J_2+(q_1-q_2)I$ is invertible if and only if ${\frac{q_1-q_2}{p_2-p_1}}$ is not an eigenvalue of $J_2$, or $p_1=p_2$ but $q_1\neq q_2$. Then the statement.
\end{proof}

\section{Induced structures on $TM\oplus T^*M$}

\subsection{Generalized metallic pseudo-Riemannian structures}

In \cite{bn} we introduced the notion of generalized metallic structure and generalized metallic Riemannian structure. We pose the following:
\begin{definition} A pair $(\tilde J, \tilde g)$ of a generalized metallic structure $\tilde J$ and a pseudo-Riemannian metric $\tilde g$
such that $\tilde J$ is $\tilde g$-symmetric is called a \textit{generalized metallic pseudo-Riemannian structure}. If ${\tilde J}^2=p{\tilde J}+qI$ with $p^2+4q < 0$, then $\tilde J$ is called a \textit{generalized metallic Norden structure}.
\end{definition}

Let $(M,J,g)$ be a Norden manifold and let $(\tilde J, \tilde g)$ be the generalized Norden structure defined in \cite{n}:
$$\tilde J:= \begin{pmatrix} J & 0 \cr
\flat_g & -J^{*} \cr
\end{pmatrix}$$
\begin{equation}\tilde g(X+\alpha, Y+\beta):=g(X,Y)+{1\over 2}g(JX,{\sharp}_g \beta)+{1\over 2}g({\sharp}_g \alpha, JY)+g({\sharp}_g \alpha, {\sharp}_g \beta), \label{550}
\end{equation}
for any $X$, $Y\in C^{\infty} (TM)$ and $\alpha$, $\beta \in C^{\infty}(T^*M)$. Then
$\tilde J$ defines the following family of generalized metallic Norden structures:
$${\tilde J}_{a,b}:=a\tilde J+bI= \begin{pmatrix}aJ+bI & 0 \cr
a\flat_g & -aJ^{*}+bI \cr
\end{pmatrix},$$
where $a$ and $b$ are real numbers, since
$${{\tilde J}_{a,b}}^2=p{{\tilde J}_{a,b}}+qI$$
with $p=2b$ and $q=-(a^2+b^2)$ and
$$\tilde g({\tilde J}_{a,b}(\sigma),\tau)=\tilde g(\sigma,{\tilde J}_{a,b}(\tau)),$$
for any $\sigma$, $\tau \in C^{\infty} (TM \oplus T^*M)$.

We remark that, up to rescaling the metric, instead of ${\tilde J}_{a,b}$, we can consider the family:
$${\hat J}_{a,b}:= \begin{pmatrix}aJ+bI & 0 \cr
\flat_g & -aJ^{*}+bI \cr
\end{pmatrix}.$$

Moreover, if $(M,J,g)$ is a metallic pseudo-Riemannian manifold with $J^2=pJ+qI$, for $p=2b$ and $q=-(a^2+b^2)$, we immediately have that
\begin{equation}\label{c}
{\hat J}:= \begin{pmatrix}J & 0 \cr
\flat_g & -J^{*}+pI \cr
\end{pmatrix}
\end{equation}
is a generalized metallic structure with:
$${\hat J}^2=p\hat J+qI.$$

If we assume $p^2+4q\neq 0$, generalizing (\ref{550}), we define the pseudo-Riemannian metric:
\begin{equation}\hat g(X+\alpha,Y+\beta):=
\label{580}\end{equation}
$$=g(X,Y)+g({\sharp}_g \alpha, {\sharp}_g \beta)+{\frac{p}{p^2+4q}}[g(X,{\sharp}_g \beta)+g(Y,{\sharp}_g \alpha)]+$$
$$-{\frac{2}{p^2+4q}}[g(JX,{\sharp}_g \beta)+g(JY,{\sharp}_g \alpha)]=$$
$$=g(X,Y)+g({\sharp}_g \alpha, {\sharp}_g \beta)+{\frac{p}{p^2+4q}}(\alpha (Y)+\beta(X))+$$
$$-{\frac{2}{p^2+4q}}(\alpha (JY)+\beta (JX)),$$
for any $X$, $Y\in C^{\infty} (TM)$ and $\alpha$, $\beta \in C^{\infty}(T^*M)$ and we have the following:
\begin{proposition}
Let $(M,J,g)$ be a metallic pseudo-Riemannian manifold such that $J^2=pJ+qI$ with $p^2+4q\neq 0$. Then
$(\hat{J},\hat{g})$ is a generalized metallic pseudo-Riemannian structure with $\hat{J}$ given by (\ref{c}) and $\hat{g}$ given by (\ref{580}).
\end{proposition}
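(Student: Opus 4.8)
The statement requires three things: that $\hat J$ be a generalized metallic structure, that $\hat g$ be a pseudo-Riemannian metric, and that $\hat J$ be $\hat g$-symmetric. The first is exactly the identity $\hat J^{2}=p\hat J+qI$, which has already been recorded above. For the second, symmetry of $\hat g$ is immediate from (\ref{580}), since $g$ is symmetric and each remaining term is manifestly invariant under $(X+\alpha)\leftrightarrow(Y+\beta)$, while nondegeneracy is a short musical-map computation carried out at the end. The substantive part is the compatibility $\hat g(\hat J\sigma,\tau)=\hat g(\sigma,\hat J\tau)$, and this is what I would establish by direct expansion.

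Write $\sigma=X+\alpha$ and $\tau=Y+\beta$, with $X,Y\in C^{\infty}(TM)$ and $\alpha,\beta\in C^{\infty}(T^{*}M)$. From the block form (\ref{c}) one has $\hat J(X+\alpha)=JX+\bigl(\flat_{g}X-J^{*}\alpha+p\alpha\bigr)$, and likewise for $\hat J(Y+\beta)$. Substituting into (\ref{580}), I would expand both $\hat g(\hat J\sigma,\tau)$ and $\hat g(\sigma,\hat J\tau)$ and rewrite every summand in terms of the eight \emph{primitive} scalars
$$g(JX,Y),\ \ g(X,Y),\ \ g(J\sharp_{g}\alpha,\sharp_{g}\beta),\ \ g(\sharp_{g}\alpha,\sharp_{g}\beta),\ \ \alpha(Y),\ \ \beta(X),\ \ \alpha(JY),\ \ \beta(JX),$$
using the elementary identities $g(Z,\sharp_{g}\gamma)=\gamma(Z)$, $\flat_{g}\circ\sharp_{g}=\mathrm{id}$, $\sharp_{g}\circ J^{*}=J\circ\sharp_{g}$ (equivalently $\flat_{g}\circ J=J^{*}\circ\flat_{g}$, the $g$-symmetry of $J$), $g(JZ,W)=g(Z,JW)$, and $J^{2}=pJ+qI$ (so that, for instance, $\alpha(J^{2}Y)=p\,\alpha(JY)+q\,\alpha(Y)$). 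After this reduction the coefficients of $g(JX,Y)$, $g(X,Y)$, $g(J\sharp_{g}\alpha,\sharp_{g}\beta)$ and $g(\sharp_{g}\alpha,\sharp_{g}\beta)$ already coincide on the two sides, while the coefficients of the mixed scalars $\alpha(Y)$, $\beta(X)$, $\alpha(JY)$, $\beta(JX)$ coincide thanks to the trivial relations $cp+2dq=1$ and $2c=dp$, where $c=\frac{p}{p^{2}+4q}$ and $d=\frac{2}{p^{2}+4q}$. Hence $\hat g(\hat J\sigma,\tau)=\hat g(\sigma,\hat J\tau)$.

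The only real difficulty here is bookkeeping: because $\hat J$ couples the $TM$- and $T^{*}M$-summands, each side unfolds into roughly a dozen terms, and one has to apply the $g$-symmetry of $J$ and the relation $J^{2}=pJ+qI$ at the right places for them to cancel in pairs; grouping the expansion by the primitive scalars above makes this transparent, and I foresee no genuine obstruction. Finally, for nondegeneracy of $\hat g$, the induced map $TM\oplus T^{*}M\to T^{*}M\oplus TM$ reads, in block form (using the canonical identification $(T^{*}M)^{*}\cong TM$),
$$\begin{pmatrix} c\,\mathrm{id}-dJ & \sharp_{g}\\ \flat_{g} & c\,\mathrm{id}-dJ^{*}\end{pmatrix};$$
if $X+\alpha$ lies in its kernel, the first row gives $\sharp_{g}\alpha=-(c\,\mathrm{id}-dJ)X$, hence $\alpha=-(c\,\mathrm{id}-dJ^{*})\flat_{g}X$, and the second row yields $\bigl((c\,\mathrm{id}-dJ^{*})^{2}-\mathrm{id}\bigr)\flat_{g}X=0$. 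Since $(J^{*})^{2}=(J^{2})^{*}=pJ^{*}+qI$ and $c^{2}+qd^{2}=\frac{1}{p^{2}+4q}$, $pd^{2}-2cd=0$, one gets $(c\,\mathrm{id}-dJ^{*})^{2}=\frac{1}{p^{2}+4q}\,\mathrm{id}$; as $p^{2}+4q\neq 0$, and provided $p^{2}+4q\neq 1$ (automatic when $p,q$ are the positive integers of a metallic number), this forces $\flat_{g}X=0$, whence $X=0$ and $\alpha=0$. Thus $\hat g$ is nondegenerate and $(\hat J,\hat g)$ is a generalized metallic pseudo-Riemannian structure.
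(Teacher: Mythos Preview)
Your verification that $\hat J$ is $\hat g$-symmetric follows exactly the paper's own route: both sides expand $\hat g(\hat J\sigma,\tau)$ and $\hat g(\sigma,\hat J\tau)$ directly from (\ref{c}) and (\ref{580}), then use the $g$-symmetry of $J$ and the identity $J^{*}=\flat_{g}J\sharp_{g}$ to match terms. Your organization by the eight primitive scalars and the bookkeeping identities $cp+2dq=1$, $2c=dp$ (with $c=\tfrac{p}{p^{2}+4q}$, $d=\tfrac{2}{p^{2}+4q}$) makes the cancellation more transparent than the paper's raw expansion, but the argument is the same.

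Where you go beyond the paper is nondegeneracy of $\hat g$: the paper simply asserts that $\hat g$ is a pseudo-Riemannian metric and never checks it. Your kernel computation is correct (apart from a harmless mislabeling of the codomain ordering in the block matrix; the ``first row'' you use is the $TM$-valued one), and the conclusion $(c\,\mathrm{id}-dJ^{*})^{2}=\tfrac{1}{p^{2}+4q}\,\mathrm{id}$ is right. This actually reveals a gap in the proposition as stated: when $p^{2}+4q=1$ the form $\hat g$ is genuinely degenerate (e.g.\ $X+\alpha$ with $\alpha=-(c-dJ^{*})\flat_{g}X$ lies in the kernel for every $X$). Your parenthetical that $p^{2}+4q\neq 1$ is automatic for positive integers $p,q$ is correct, but the paper's Definition~2.1 allows real $p,q$, so strictly speaking the hypothesis $p^{2}+4q\neq 0$ should be strengthened to $p^{2}+4q\notin\{0,1\}$. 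This is an observation about the statement, not a flaw in your proof.
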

\begin{proof}
For any $X+\alpha$, $Y+\beta\in C^{\infty}(TM\oplus T^*M)$, we have:
$$\hat g(\hat J (X+\alpha), Y+\beta)=\hat{g}(JX+\flat_g(X)-J^*(\alpha)+p\alpha,Y+\beta)=$$
$$=g(JX,Y)+\beta (X)-g({\sharp}_gJ^*(\alpha),{\sharp}_g \beta)+pg({\sharp}_g \alpha,{\sharp}_g \beta)+$$
$$+{p\over {p^2+4q}}[\beta(JX)+g(X,Y)-\alpha (JY)+p\alpha (Y)]-{2\over {p^2+4q}}[p\beta (JX)+q\beta (X)+$$
$$+g(X,JY)-q \alpha (Y)]$$
and
$$\hat g(X+\alpha,\hat J (Y+\beta))=g(X+\alpha,JY+\flat_g(Y)-J^*(\beta)+p\beta)=$$
$$=g(X,JY)+\alpha (Y)-g({\sharp}_gJ^*(\beta),{\sharp}_g \alpha)+pg({\sharp}_g \alpha,{\sharp}_g \beta)+$$
$$+{p\over {p^2+4q}}[\alpha (JY)+g(X,Y)-\beta(JX)+p\beta (X)]-{2\over {p^2+4q}}[p\alpha (JY)+q\alpha (Y)+$$
$$+g(Y,JX)-q \beta (X)].$$
Since $(M,J,g)$ is a metallic pseudo-Riemannian manifold and $J^*={\flat}_gJ {\sharp}_g$ we get the statement.
\end{proof}

\begin{definition} Let $(M,J,g)$ be a metallic pseudo-Riemannian manifold. The pair $(\hat J, \hat g)$ defined by (\ref{c}) and (\ref{580}) is called the \textit{generalized metallic pseudo-Riemannian structure defined by} $(J,g)$.
\end{definition}

More generally, we can construct generalized metallic pseudo-Riemannian structures by using a pseudo-Riemannian metric $g$ on $M$ and an arbitrary $g$-symmetric endomorphism $J$ of the tangent bundle as in the followings.

\begin{theorem}
Let $g$ be a pseudo-Riemannian metric on $M$ and let $J$ be an arbitrary endomorphism of the tangent bundle which is $g$-symmetric. Then
$(\check{J},\check{g})$ is a generalized metallic pseudo-Riemannian structure, where
\begin{equation}
\check{J}:=\begin{pmatrix} J & (-J^2+pJ+qI)\sharp_g \cr
\flat_g & -J^*+pI \cr
\end{pmatrix}
\end{equation}
and
\begin{equation}
\check{g}(X+\alpha,Y+\beta):=\label{560}
\end{equation}
$$=g(X,Y)+\frac{p^2+4q}{4}g(\sharp_g\alpha,\sharp_g\beta)+\frac{p}{4}(\alpha(Y)+\beta(X))-\frac{1}{2}(\alpha(JY)+\beta(JX)),$$
for any $X$, $Y\in C^{\infty} (TM)$ and $\alpha$, $\beta \in C^{\infty}(T^*M)$ and $p$ and $q$ any fixed real numbers.

Moreover, $\check{J}$ satisfies:
$$(\check{J}(X+\alpha),Y+\beta)+(X+\alpha,\check{J}(Y+\beta))=p\cdot(X+\alpha,Y+\beta),$$
where
\begin{equation}\label{g3}
(X+\alpha,Y+\beta):=-\frac{1}{2}(\alpha(Y)-\beta(X))
\end{equation}
is the natural symplectic structure on $TM\oplus T^*M$.
\end{theorem}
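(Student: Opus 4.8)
The plan is to verify in turn the four assertions contained in the statement: that $\check J$ is a generalized metallic structure, i.e. $\check J^{2}=p\check J+qI$; that $\check g$ given by (\ref{560}) is a pseudo-Riemannian metric (symmetric and non-degenerate); that $\check J$ is $\check g$-symmetric; and that $\check J$ satisfies the displayed identity against the symplectic form (\ref{g3}). Everything reduces to block-matrix computations on $TM\oplus T^{*}M$, using only $\sharp_g\flat_g=\mathrm{id}_{TM}$, $\flat_g\sharp_g=\mathrm{id}_{T^{*}M}$, the relations $J^{*}=\flat_g J\sharp_g$, $\sharp_g J^{*}=J\sharp_g$, $J^{*}\flat_g=\flat_g J$, $\flat_g J^{2}\sharp_g=(J^{*})^{2}$ together with their iterates, and the $g$-symmetry of $J$ (hence of every polynomial in $J$).

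First I square $\check J$ entrywise. The $(1,1)$-entry is $J^{2}+(-J^{2}+pJ+qI)\sharp_g\flat_g=pJ+qI$; by $\sharp_g J^{*}=J\sharp_g$ and $J^{*}\flat_g=\flat_g J$ the off-diagonal entries collapse to $p$ times the corresponding entries of $\check J$; and the $(2,2)$-entry is $\flat_g(-J^{2}+pJ+qI)\sharp_g+(-J^{*}+pI)^{2}=\bigl(-(J^{*})^{2}+pJ^{*}+qI\bigr)+\bigl((J^{*})^{2}-2pJ^{*}+p^{2}I\bigr)=p(-J^{*}+pI)+qI$. Hence $\check J^{2}=p\check J+qI$. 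For $\check g$: symmetry under $(X+\alpha)\leftrightarrow(Y+\beta)$ is visible term by term in (\ref{560}); for non-degeneracy I identify $T^{*}M$ with $TM$ via $\sharp_g$, so that $\check g$ becomes the bilinear form whose Gram operator relative to $g\oplus g$ is $\bigl(\begin{smallmatrix} I & Q\\ Q & \tfrac{p^{2}+4q}{4}I\end{smallmatrix}\bigr)$ with $Q:=\tfrac p4 I-\tfrac12 J$ ($g$-self-adjoint); its Schur complement is $\tfrac{p^{2}+4q}{4}I-Q^{2}=\tfrac14\bigl[(p^{2}+4q)I-(J-\tfrac p2 I)^{2}\bigr]$, and invertibility of this operator gives non-degeneracy of $\check g$.

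For the $\check g$-symmetry of $\check J$ the cleanest formulation is $\mathcal G\check J=\check J^{\top}\mathcal G$, where $\mathcal G=\bigl(\begin{smallmatrix}\flat_g & \tfrac p4 I-\tfrac12 J^{*}\\ \tfrac p4 I-\tfrac12 J & \tfrac{p^{2}+4q}{4}\sharp_g\end{smallmatrix}\bigr)\colon TM\oplus T^{*}M\to T^{*}M\oplus TM$ is the (symmetric) bundle map representing $\check g$ through the natural pairing, and $\check J^{\top}=\bigl(\begin{smallmatrix}J^{*} & \flat_g\\ (-J^{2}+pJ+qI)\sharp_g & -J+pI\end{smallmatrix}\bigr)$ is the transpose of $\check J$ (here $\sharp_g(-(J^{*})^{2}+pJ^{*}+qI)=(-J^{2}+pJ+qI)\sharp_g$). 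Multiplying out and simplifying each of the four entries with the identities above, both products reduce to the same operator; the bottom-right entry, which produces a $J^{3}$-term, is the heaviest. I expect this bookkeeping --- keeping the positions of $\sharp_g,\flat_g$ straight and converting each $J^{*}$ back to $J$ --- to be the main (though purely mechanical) obstacle of the proof.

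Finally, writing (\ref{g3}) as the bundle map $\omega=\bigl(\begin{smallmatrix}0 & -\tfrac12 I\\ \tfrac12 I & 0\end{smallmatrix}\bigr)\colon TM\oplus T^{*}M\to T^{*}M\oplus TM$, the asserted identity is $\omega\check J+\check J^{\top}\omega=p\,\omega$. Computing the left-hand side block by block: the diagonal blocks cancel, the top-right and bottom-left blocks give $-\tfrac p2 I$ and $\tfrac p2 I$, and the bottom-right block equals $\tfrac12\bigl[(-J^{2}+pJ+qI)\sharp_g-\sharp_g(-(J^{*})^{2}+pJ^{*}+qI)\bigr]$, which vanishes because $\sharp_g(J^{*})^{2}=J^{2}\sharp_g$ and $\sharp_g J^{*}=J\sharp_g$. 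Hence $\omega\check J+\check J^{\top}\omega=p\,\omega$, that is, $(\check J(X+\alpha),Y+\beta)+(X+\alpha,\check J(Y+\beta))=p\,(X+\alpha,Y+\beta)$. Equivalently, one may expand both pairings directly from (\ref{g3}) and the definition of $\check J$, using $(J^{*}\alpha)(Z)=\alpha(JZ)$ and the $g$-symmetry of $J$; the cancellations are identical.
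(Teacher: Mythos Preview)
Your argument is correct and proves exactly what the paper proves; the difference is purely one of packaging. The paper verifies $\check J^{2}=p\check J+qI$ by ``direct computation'' (as you do, block by block), checks the symplectic identity by writing out $\check J(X+\alpha)$ and inserting it into the pairing (\ref{g3}), and establishes the $\check g$-symmetry of $\check J$ by expanding $\check g(\check J(X+\alpha),Y+\beta)$ and $\check g(X+\alpha,\check J(Y+\beta))$ term by term and comparing, using $J^{*}=\flat_g J\sharp_g$ and the $g$-symmetry of $J$. You instead encode $\check g$ and the symplectic form as bundle maps $\mathcal G$ and $\omega$ and verify the operator identities $\mathcal G\check J=\check J^{\top}\mathcal G$ and $\omega\check J+\check J^{\top}\omega=p\,\omega$. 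This is the same content in a more compact formalism; the trade-off is that you must keep careful track of the domains and codomains of $\check J^{\top}$, $\mathcal G$ and $\omega$ (each shuttling between $TM\oplus T^{*}M$ and $T^{*}M\oplus TM$), whereas the paper's elementwise expansion is longer but type-free.

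One remark: you go beyond the paper in addressing the non-degeneracy of $\check g$, reducing it to invertibility of the Schur complement $\tfrac14\bigl[(p^{2}+4q)I-(J-\tfrac{p}{2}I)^{2}\bigr]$, but you stop short of proving that invertibility. The paper does not discuss non-degeneracy at all. In fact, for a completely arbitrary $g$-symmetric $J$ this operator need not be invertible (take $J=\lambda I$ with $(\lambda-\tfrac p2)^{2}=p^{2}+4q$, which has real solutions whenever $p^{2}+4q\geq 0$), so strictly speaking $\check g$ is pseudo-Riemannian only under a mild genericity assumption on the spectrum of $J$. Your Schur-complement formulation makes this transparent; the paper's proof simply leaves the point unaddressed.
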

\begin{proof}
A direct computation gives ${\check J}^2=p \check J+q I$.

Moreover, for any $X+\alpha\in C^{\infty}(TM\oplus T^*M)$, we have:
$$\check{J}(X+\alpha)=JX-J^2(\sharp_g\alpha)+pJ(\sharp_g\alpha)+q\sharp_g\alpha+\flat_g(X)-J^*(\alpha)+p\alpha$$
and using the definition of $(\cdot,\cdot)$ we get the last statement.

Now, for any $X+\alpha$, $Y+\beta\in C^{\infty}(TM\oplus T^*M)$, we have:
$$\check{g}(\check{J}(X+\alpha),Y+\beta)=g(JX,Y)-g(J^2(\sharp_g \alpha),Y)+
pg(J(\sharp_g \alpha),Y)+$$$$+qg(\sharp_g\alpha,Y)+\frac{p^2+4q}{4}g(X,\sharp_g\beta)-\frac{p^2+4q}{4}g(\sharp_gJ^*(\alpha),\sharp_g\beta)+\frac{p^2+4q}{4}pg(\sharp_g\alpha,\sharp_g\beta)
+$$$$+\frac{p}{4}[g(X,Y)-\alpha(JY)+p\alpha(Y)+\beta(JX)-\beta(J^2(\sharp_g\alpha))+p\beta(J(\sharp_g\alpha))+q\beta(\sharp_g\alpha)]
-$$$$-\frac{1}{2}[g(X,JY)-\alpha(J^2Y)+p\alpha(JY)+\beta(J^2X)-\beta(J^3(\sharp_g\alpha))+p\beta(J^2(\sharp_g\alpha))+q\beta(J(\sharp_g\alpha))]$$
and
$$\check{g}(X+\alpha,\check{J}(Y+\beta))=g(JY,X)-g(J^2(\sharp_g \beta),X)+
pg(J(\sharp_g \beta),X)+$$$$+qg(\sharp_g\beta,X)+\frac{p^2+4q}{4}g(Y,\sharp_g\alpha)-\frac{p^2+4q}{4}g(\sharp_gJ^*(\beta),\sharp_g\alpha)+\frac{p^2+4q}{4}pg(\sharp_g\beta,\sharp_g\alpha)
+$$$$+\frac{p}{4}[g(Y,X)-\beta(JX)+p\beta(X)+\alpha(JY)-\alpha(J^2(\sharp_g\beta))+p\alpha(J(\sharp_g\beta))+q\alpha(\sharp_g\beta)]
-$$$$-\frac{1}{2}[g(Y,JX)-\beta(J^2X)+p\beta(JX)+\alpha(J^2Y)-\alpha(J^3(\sharp_g\beta))+p\alpha(J^2(\sharp_g\beta))+q\alpha(J(\sharp_g\beta))].$$
Since $J$ is $g$-symmetric and $J^*=\flat_gJ\sharp_g$ we get the statement.
\end{proof}

\begin{remark}
i) For $p=0$, the structure $\check{J}$ is anti-calibrated
with respect to (\ref{g3}).

ii) In particular, for $p=0$ and $q=1$ we get the generalized product structure
$$\check{J}_p:=\begin{pmatrix} J & (-J^2+I) \sharp_g \cr
\flat_g & -J^* \cr
\end{pmatrix}$$
and for $p=0$ and $q=-1$ we get the generalized complex structure
$$\check{J}_c:=\begin{pmatrix} J & (-J^2-I)\sharp_g \cr
\flat_g & -J^* \cr
\end{pmatrix}$$
which are both anti-calibrated.

iii) If $J$ is a metallic structure with $J^2=pJ+qI$, then $\check{J}=\hat{J}$.
\end{remark}

\begin{remark}
i) Notice that if $(J,g)$ is a metallic pseudo-Riemannian structure such that $J^2=pJ+qI$ with $p^2+4q<0$, then $\hat{J}'$ obtained from $\hat{J}_{a,b}$ (beside $\hat J$) and given by:
\begin{equation}\label{ecu}
\hat{J}':=\begin{pmatrix} -J+pI & 0 \cr
\flat_g & J^* \cr
\end{pmatrix}
\end{equation}
is a generalized metallic structure with:
$${\hat J}'^2=p\hat J'+qI.$$

Moreover, the two structures $\hat{J}$ and $\hat{J}'$ coincide with the ones obtained by considering first the Norden structures $J_{\pm}$ induced by $J$ and then defining the generalized metallic structures
\begin{equation}
\hat{J}_{a,b}:=\begin{pmatrix} aJ_{\pm}+bI & 0 \cr
\flat_g & -aJ_{\pm}^*+bI \cr
\end{pmatrix},
\end{equation}
where $a=\pm \frac{\sqrt{-p^2-4q}}{2}$ and $b=\frac{p}{2}$.

ii)
The structure $\check{J}'$ defined by:
\begin{equation}
\check{J}':=\begin{pmatrix} -J+pI & (-J^2+pJ+qI)\sharp_g \cr
\flat_g & J^* \cr
\end{pmatrix}
\end{equation}
is also a generalized metallic structure and for the particular case when $J$ is metallic with $J^2=pJ+qI$, it is precisely $\hat{J}'$.
\end{remark}

\begin{remark} Let $(M,g)$ be a pseudo-Riemannian manifold and let $J$ be an arbitrary $g$-symmetric endomorphism of the tangent bundle. Then for any $p$ and $q$ real numbers with $p^2+4q<0$:
$$\check J_{\pm}:=\pm({2\over{\sqrt{-p^2-4q}}}\check J-{p\over{\sqrt{-p^2-4q}}}I)$$
are generalized Norden structures with respect to the metric $\check g$.
\end{remark}

\subsection{Generalized metallic natural connection}

Let $(M,J,g)$ be a metallic pseudo-Riemannian manifold and let $D$ be the metallic natural connection given by (\ref {500}). We define:
$$\hat D : C^{\infty}(TM\oplus T^*M) \times C^{\infty}(TM\oplus T^*M)\rightarrow C^{\infty}(TM\oplus T^*M) \label{133}
$$
by:
\begin{equation}
{\hat D}_{X+\alpha} (Y+\beta):={D_X} Y+{D_X} {\beta},  \label{132}
\end{equation}
for any $X$, $Y\in C^{\infty} (TM)$ and $\alpha$, $\beta \in C^{\infty}(T^*M)$ and we have:

\begin{theorem}
The linear connection $\hat D$ satisfies the following conditions:
\begin{equation}\left \{ \begin{array}{l}{\hat D}{\hat J}=0\\

\vspace{0.1cm}
{\hat D}{\hat g}=0.

\end{array}
\right.\label{135}
\end{equation}
Moreover, $T^{\hat D}(X+\alpha,Y+\beta)=T^D(X,Y)$, for any $X$, $Y\in C^{\infty} (TM)$ and $\alpha$, $\beta \in C^{\infty}(T^*M)$, and $\hat D$ is flat if and only if $D$ is flat.
\end{theorem}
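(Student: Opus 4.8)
The plan is to observe first that, by (\ref{132}), $\hat{D}$ is nothing but the connection induced on $TM\oplus T^*M$ by the metallic natural connection $D$ of (\ref{500}): it acts as $D$ on the $TM$-summand and as the dual connection of $D$ (given by $(D_X\beta)(Y)=X(\beta(Y))-\beta(D_XY)$) on the $T^*M$-summand, and $\hat{D}_{X+\alpha}$ depends only on the anchor component $X$. Starting from the relations $DJ=0$ and $Dg=0$ satisfied by $D$, I would record the auxiliary parallelisms $D\flat_g=0$, $D\sharp_g=0$ and $DJ^*=0$: the first two are equivalent to $Dg=0$ in view of $\sharp_g\circ\flat_g=\mathrm{id}$, and the third follows from $J^*=\flat_gJ\sharp_g$ together with $DJ=0$ and $D\flat_g=D\sharp_g=0$. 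These are the only facts needed.

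For $\hat{D}\hat{J}=0$ I would compute $(\hat{D}_{X+\alpha}\hat{J})(Y+\beta)=\hat{D}_{X+\alpha}(\hat{J}(Y+\beta))-\hat{J}(\hat{D}_{X+\alpha}(Y+\beta))$ block by block from (\ref{c}): writing $\hat{J}(Y+\beta)=JY+\flat_g(Y)-J^*(\beta)+p\beta$, the $TM$-component of the result is $D_X(JY)-J(D_XY)=(D_XJ)Y$ and the $T^*M$-component is $(D_X\flat_g)(Y)-(D_XJ^*)(\beta)$, both of which vanish by the parallelisms above. For $\hat{D}\hat{g}=0$ I would expand $(\hat{D}_{X+\alpha}\hat{g})(Y+\beta,Z+\gamma)$ from (\ref{580}) and exploit that $\hat{g}$ is a combination, with constant coefficients, of four elementary tensors: $g$ on $TM$, the dual metric $g(\sharp_g\cdot,\sharp_g\cdot)$ on $T^*M$, the canonical pairing $\beta(Y)$, and the twisted pairing $\beta(JY)=(J^*\beta)(Y)$. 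Each is parallel for $\hat{D}$ — respectively by $Dg=0$, by $D\sharp_g=0$, by the tautology $X(\beta(Y))=(D_X\beta)(Y)+\beta(D_XY)$, and by the same tautology combined with $DJ^*=0$ — so every term in the expansion cancels.

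For the torsion I would write $T^{\hat{D}}(X+\alpha,Y+\beta)=\hat{D}_{X+\alpha}(Y+\beta)-\hat{D}_{Y+\beta}(X+\alpha)-[X+\alpha,Y+\beta]=(D_XY-D_YX)+(D_X\beta-D_Y\alpha)-[X+\alpha,Y+\beta]$. Since the $TM$-component of the bracket on $TM\oplus T^*M$ is the usual Lie bracket $[X,Y]$, the $TM$-part is $D_XY-D_YX-[X,Y]=T^D(X,Y)$, and the $T^*M$-part is $D_X\beta-D_Y\alpha$ minus the $T^*M$-component of the bracket, which I would check cancels once the bracket of \cite{bn} is unwound, using $Dg=0$ and $DJ=0$; this yields $T^{\hat{D}}(X+\alpha,Y+\beta)=T^D(X,Y)$. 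Finally, since $\hat{D}_{X+\alpha}$ sees only $X$ and the $TM$-component of the bracket is $[X,Y]$, one finds $R^{\hat{D}}(X+\alpha,Y+\beta)(Z+\gamma)=R^D(X,Y)Z-\gamma\circ R^D(X,Y)$, with the first summand in $TM$ and the second in $T^*M$; this vanishes for all $Z$ and $\gamma$ precisely when $R^D(X,Y)=0$, so $\hat{D}$ is flat if and only if $D$ is flat.

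The one step I expect to require real care is the $T^*M$-component of the torsion identity, since it depends on the precise bracket on the generalized tangent bundle fixed in \cite{bn}; once the parallelisms $D\flat_g=D\sharp_g=DJ^*=0$ are established, everything else reduces to routine componentwise bookkeeping with the block forms (\ref{c}), (\ref{580}) and the definition (\ref{132}).
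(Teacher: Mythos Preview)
Your treatment of $\hat D\hat J=0$ and $\hat D\hat g=0$ is correct and is exactly what the paper does; your packaging via the auxiliary parallelisms $D\flat_g=D\sharp_g=DJ^*=0$ is simply a tidier way of stating what the paper's line-by-line expansion verifies. Your curvature formula $R^{\hat D}(X+\alpha,Y+\beta)(Z+\gamma)=R^D(X,Y)Z-\gamma\circ R^D(X,Y)$ is also the paper's formula, written with the dual action spelled out.

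Where you diverge from the paper is the torsion. You expect to import a bracket from \cite{bn} and flag the $T^*M$-component as the delicate step, to be handled with $Dg=0$ and $DJ=0$. The paper does not borrow any bracket: inside the proof it \emph{defines}
\[
[X+\alpha,Y+\beta]_D:=[X,Y]+D_X\beta-D_Y\alpha,
\]
and uses this $D$-dependent bracket in both $T^{\hat D}$ and $R^{\hat D}$. With that choice the $T^*M$-part of the torsion cancels identically by construction, and $T^{\hat D}=T^D$ follows with no further input. So the step you singled out as requiring real care is, in the paper, a definition rather than a computation. This matters: if instead you used the Courant or Dorfman bracket, the $T^*M$-component would \emph{not} cancel in general (no amount of $Dg=0$ or $DJ=0$ forces $D_X\beta-D_Y\alpha$ to equal $\mathcal L_X\beta-\mathcal L_Y\alpha-\tfrac12 d(\iota_X\beta-\iota_Y\alpha)$), and the stated identity $T^{\hat D}(X+\alpha,Y+\beta)=T^D(X,Y)$ would fail. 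Once you adopt the paper's $[\,\cdot\,,\,\cdot\,]_D$, your outline becomes a complete proof.
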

\begin{proof}
From the definition of $\hat J$ and from the properties of $D$ we get:
$$(\hat D_{X+\alpha} {\hat J})(Y+ \beta)={\hat D}_{X+\alpha} (JY+\flat_g(Y)-J^*(\beta)+p\beta)=D_X JY+D_X(\flat_g(Y)-J^*(\beta)+p\beta)=$$
$$=J(D_X Y)+\flat_g(D_X Y)-J^*(D_X \beta)+pD_X \beta=\hat J(D_X Y)+\hat J(D_X \beta)=\hat J({\hat D}_{X+\alpha} (Y+\beta)),$$
for any $X$, $Y\in C^{\infty} (TM)$ and $\alpha$, $\beta \in C^{\infty}(T^*M)$.

Moreover, from the definition of $\hat g$ we get:
$$X(\hat g(Y+\beta,Z+\gamma))-\hat g({\hat D}_{X+\alpha}(Y+\beta),Z+\gamma)-\hat g(Y+\beta, {\hat D}_{X+\alpha} (Z+\gamma))=$$
$$=X(g(Y,Z)+g({\sharp}_g \beta,{\sharp}_g \gamma)+{p\over {p^2+4q}}(\beta (Z)+\gamma(Y))-{2\over {p^2+4q}}(\beta (JZ)+\gamma (JY))-$$
$$-[g({D}_X Y,Z)+g({\sharp}_g (D_X\beta),{\sharp}_g \gamma)+{p\over {p^2+4q}}((D_X \beta)Z+\gamma(D_X Y))-$$
$$-{2\over {p^2+4q}}((D_X \beta) (JZ)+\gamma (J(D_X Y)))]-$$
$$-[g(Y,{D}_XZ)+g({\sharp}_g\beta,{\sharp}_g (D_X \gamma))+{p\over {p^2+4q}}((D_X \gamma) Y+\beta(D_X Z))-$$
$$-{2\over {p^2+4q}}((D_X \gamma) (JY)+\beta (J(D_X Z)))]=0,$$
for any $X$, $Y$, $Z\in C^{\infty} (TM)$ and $\alpha$, $\beta$, $\gamma \in C^{\infty}(T^*M)$.

If we define by:
$$T^{\hat D}(X+\alpha,Y+\beta):=\hat{D}_{X+\alpha}(Y+\beta)-\hat{D}_{Y+\beta}(X+\alpha)-[X+\alpha,Y+\beta]_{D}$$
the torsion of $\hat{D}$, where
$$[X+\alpha,Y+\beta]_{D}:=[X,Y]+D_X \beta-D_Y\alpha,$$
we have:
$$T^{\hat D}(X+\alpha,Y+\beta)=T^D(X,Y).$$

Also, if we define by:
$$R^{\hat{D}}(X+\alpha,Y+\beta)(Z+\gamma):=$$
$$=\hat{D}_{X+\alpha}\hat{D}_{Y+\beta}(Z+\gamma)-
\hat{D}_{Y+\beta}\hat{D}_{X+\alpha}(Z+\gamma)-\hat{D}_{[X+\alpha,Y+\beta]_D}(Z+\gamma)$$
the curvature of $\hat{D}$, we have:
$$R^{\hat{D}}(X+\alpha,Y+\beta)(Z+\gamma)=R^{D}(X,Y)Z+R^{D}(X,Y)\gamma$$
and we get the statement.
\end{proof}

\begin{definition}
The linear connection $\hat D$ defined by (\ref{132}) is called the \textit{generalized metallic natural connection} of $(TM\oplus T^*M, \hat J, \hat g)$.
\end{definition}

\section{Metallic pseudo-Riemannian structures on tangent and cotangent bundles}

\subsection{Metallic pseudo-Riemannian structures on the tangent bundle}

Let $(M,g)$ be a pseudo-Riemannian manifold and let $\nabla$ be a linear connection on $M$. Then $\nabla$ defines the decomposition into the horizontal and vertical subbundles of $T(TM)$:
$$ T(TM)=T^H(TM)\oplus T^V(TM).$$

Let $\pi :TM \rightarrow M$ be the canonical projection and ${\pi}_*:T(TM) \rightarrow TM$ be the tangent map of $\pi$. If $a \in TM$ and $A \in T_a(TM)$, then ${\pi}_*(A) \in T_{\pi (a)}M$ and we denote by ${\chi}_a$ the standard identification between $T_{\pi (a)}M$ and its tangent space
$ T_a(T_{\pi (a)}M)$.

Let ${\Psi}^{\nabla} :TM \oplus T^{*}M \rightarrow T(TM)$ be the bundle morphism defined by:
$${\Psi}^{\nabla}(X+\alpha):=X^H_a+{\chi}_a({\sharp}_g \alpha),$$
where $a \in TM$ and $X^H_a$ is the horizontal lifting of $X\in T_{\pi (a)}M$.

Let $\left\{ x^{1},...,x^{n}\right\} $ be local coordinates on $M$, let $\left\{ {\tilde{x}}^{1},..., {\tilde{x}}^{n},y^1,...,y^n\right\} $ be respectively the corresponding local coordinates on $TM$ and let\\
$\{X_1,...,X_n, \dfrac{\partial }{\partial
{y^{1}}},.., \dfrac{\partial }{\partial
{y^{n}}}\}$ be a local frame on $T(TM)$, where $X_i=\dfrac{\partial }{\partial
{\tilde{x}}^{i}}$.

We have:
$${\Psi}^{\nabla}\left(\dfrac{\partial }{\partial
{x^{i}}}\right)=X_i^H$$
$${\Psi}^{\nabla}\left({dx^j}\right)=g^{jk} \dfrac{\partial}{\partial y^k}.$$

Let $J$ be an arbitrary $g$-symmetric endomorphism on the tangent bundle. For any $p$ and $q$ real numbers, let
$$ \check {J}:=\begin{pmatrix} J & (-J^2+pJ+qI)\sharp_g \cr
\flat_g & -J^*+pI \cr
\end{pmatrix}$$
be the generalized metallic pseudo-Riemannian structure defined by $(J,g)$ with the pseudo-Riemannian metric $\check g$ defined by (\ref {560}).
The isomorphism ${\Psi}^{\nabla}$ allows us to construct a natural metallic structure $\bar J$ and a natural pseudo-Riemannian metric $\bar g$ on $TM$ in the following way.

We define $\bar J:T(TM) \rightarrow T(TM)$ by
$${\bar J}:= ({\Psi}^{\nabla} )\circ{\hat J} \circ ({\Psi}^{\nabla} )^{-1}$$
and the pseudo-Riemannian metric $\bar g$ on $TM$ by
$${\bar g}:= (({\Psi}^{\nabla} )^{-1})^{*}(\hat g). $$

\begin{proposition} $(TM,{\bar J},\bar g)$ is a metallic pseudo-Riemannian manifold.
\end{proposition}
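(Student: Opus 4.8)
The plan is to obtain the statement entirely from the fact that the bundle morphism $\Psi^{\nabla}$ is, fibrewise, a linear isomorphism, so that $(\bar J,\bar g)$ is simply the transport to $T(TM)$ of the generalized metallic pseudo-Riemannian structure $(\check J,\check g)$ on $TM\oplus T^{*}M$ (written $\hat J$, $\hat g$ in the definitions of $\bar J$ and $\bar g$); the metallic equation, the non-degeneracy of $\bar g$ and the $\bar g$-symmetry of $\bar J$ then become purely formal consequences of conjugation and pull-back.

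First I would check that, for each $a\in TM$, $\Psi^{\nabla}$ restricts to a linear isomorphism $(TM\oplus T^{*}M)_{\pi(a)}\to T_a(TM)$. The horizontal lift $X\mapsto X^{H}_a$ is a linear isomorphism onto the horizontal subspace $T^{H}_a(TM)$, while $\alpha\mapsto\chi_a(\sharp_g\alpha)$ is a linear isomorphism of $T^{*}_{\pi(a)}M$ onto the vertical subspace $T^{V}_a(TM)$, since $\sharp_g$ is bijective ($g$ being non-degenerate) and $\chi_a$ is the standard identification; because $T_a(TM)=T^{H}_a(TM)\oplus T^{V}_a(TM)$, the map $\Psi^{\nabla}_a$ is a linear isomorphism. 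It is smooth with smooth inverse, being built from $\nabla$ and $g$, so $\bar J=\Psi^{\nabla}\circ\hat J\circ(\Psi^{\nabla})^{-1}$ is a well-defined smooth $(1,1)$-tensor field on $TM$ and $\bar g(A,B)=\hat g\bigl((\Psi^{\nabla})^{-1}A,(\Psi^{\nabla})^{-1}B\bigr)$ a smooth symmetric $2$-covariant tensor field on $TM$.

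Then I would transport the three defining properties in turn. Conjugation by an isomorphism is multiplicative, hence $\bar J^{2}=\Psi^{\nabla}\circ\hat J^{2}\circ(\Psi^{\nabla})^{-1}$; since $\hat J^{2}=p\hat J+qI$ by the Theorem establishing that $(\check J,\check g)$ is a generalized metallic pseudo-Riemannian structure, we get $\bar J^{2}=p\bar J+qI$. The pull-back of the non-degenerate $\hat g$ along the fibrewise isomorphism $(\Psi^{\nabla})^{-1}$ is again non-degenerate, with the same signature, so $\bar g$ is a pseudo-Riemannian metric on $TM$. Finally, for $A,B\in T_a(TM)$ set $u=(\Psi^{\nabla})^{-1}A$ and $v=(\Psi^{\nabla})^{-1}B$; then $\bar g(\bar JA,B)=\hat g(\hat Ju,v)=\hat g(u,\hat Jv)=\bar g(A,\bar JB)$ by the $\hat g$-symmetry of $\hat J$. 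Combining the three facts shows that $(\bar J,\bar g)$ is a metallic pseudo-Riemannian structure on $TM$, i.e. $(TM,\bar J,\bar g)$ is a metallic pseudo-Riemannian manifold.

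I expect the main (mild) obstacle to be precisely the first step: confirming that $\Psi^{\nabla}$ is fibrewise bijective --- that $\chi_a\circ\sharp_g$ maps onto $T^{V}_a(TM)$, which is where non-degeneracy of $g$ enters, and that the horizontal and vertical subspaces together span $T_a(TM)$, which is the splitting determined by $\nabla$. Once this is in place the rest is formal. It is also worth recording explicitly that $\hat J$ here is the operator $\check J$ of that Theorem, so that $\hat J^{2}=p\hat J+qI$ and the $\hat g$-symmetry of $\hat J$ are available for an arbitrary $g$-symmetric endomorphism $J$, with no further hypothesis such as $p^{2}+4q\neq 0$.
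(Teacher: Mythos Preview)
Your argument is correct and is precisely the approach the paper takes: the paper's own proof is the single line ``from the definition it follows that $\bar J^{2}=p\bar J+qI$ and $\bar g(\bar JX,Y)=\bar g(X,\bar JY)$,'' and your write-up simply unpacks why conjugation and pull-back along the fibrewise isomorphism $\Psi^{\nabla}$ transport these identities. Your observation that the $\hat J$, $\hat g$ appearing in the definitions of $\bar J$, $\bar g$ should be read as the $\check J$, $\check g$ of the preceding theorem (so that no hypothesis like $p^{2}+4q\neq 0$ is needed) is a useful clarification of a notational slip in the text.
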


\begin{proof}
From the definition it follows that ${\bar J}^2=p{\bar J}+qI$ and ${\bar g}({\bar J}X,Y)={\bar g}(X,{\bar J}Y)$, for any $X,Y \in C^{\infty}(T(TM))$.
\end{proof}

In local coordinates, we have the following expressions for ${\bar J}$ and $\bar g$:
$$
\left \{
\begin{array}{l}
\vspace{0.2cm}
{\bar J} \left(  X_i^H\right)=J^k_iX_k^H+{\dfrac{\partial }{\partial
y^{i}}}\\
{\bar J} \left({\dfrac{\partial }{\partial
y^j}}\right)=(-J^2+pJ+qI)^k_jX^H_k-J^k_j{\dfrac{\partial }{\partial
y^{k}}}+p{\dfrac{\partial }{\partial
y^{j}}}
\end{array}\label{190}
\right.
$$
and
$$
\left \{
\begin{array}{l}
\vspace{0.2cm}
{\bar g} \left(  X_i^H,X^H_j\right)=g_{ij}\\
\vspace{0.2cm}
{\bar g} \left(  X_i^H,{\dfrac{\partial }{\partial
y^{j}}}\right )={\dfrac{p }{p^2+4q}}g_{ij}-{\dfrac{2 }{p^2+4q}}g_{jl}J^l_i\\
{\bar g} \left({\dfrac{\partial }{\partial
y^{i}}},{\dfrac{\partial }{\partial
y^{j}}}\right)=g_{ij}.
\end{array}\label{200}
\right.
$$

Computing the Nijenhuis tensor of ${\bar J}$, in the case when $J$ is metallic with $J^2=pJ+qI$ and $\nabla$ is the Levi-Civita connection of $g$, we get:
$$
\begin{array}{l}
\vspace{0.2cm}
N_{{\bar J}}\left({\dfrac{\partial }{\partial y^{i}}},{\dfrac{\partial }{\partial y^{j}}}\right)=0 \\
\end{array}\label{210}
$$
$$
\begin{array}{l}
\vspace{0.2cm}
N_{{\bar J}} \left( X^H_{i},{\dfrac{\partial }{\partial y^{j}}}\right)={\left( \left( {\nabla}_{JX_{i}} J \right) X_{j}-J \left( {\nabla}_{X_{i}} J \right){ X_{j}}\right)}^k {\dfrac{\partial }{\partial y^{k}}}\\
\end{array} \label{220}
$$
$$
\begin{array}{l}
\vspace{0.2cm}
N_{{\bar J}}\left(  X_i^H, X_j^H \right)=(N_J\left( {X_{i}},X_j\right))^kX_k^H -\\
- y^s \left(J^k_iJ^h_jR^r_{khs} -J_l^rJ^k_iR^l_{kjs}-J^h_jJ^r_lR^l_{ihs}+pJ^r_lR^l_{ijs}+qR^r_{ijs}\right) {{\dfrac{\partial }{\partial y^{r}}}}. \\
\end{array}\label{230}
$$

Therefore we can state the following:

\begin{proposition} Let $(M,J,g)$ be a flat locally metallic pseudo-Riemannian manifold. If $\nabla$ is the Levi-Civita connection of $g$, then $({\bar J},\bar g)$ is an integrable metallic pseudo-Riemannian structure on $TM$.
\end{proposition}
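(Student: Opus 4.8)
The plan is to observe that, by the Proposition asserting that $(TM,\bar J,\bar g)$ is a metallic pseudo-Riemannian manifold, the only thing left to prove is integrability of $\bar J$, i.e.\ $N_{\bar J}=0$. Since $N_{\bar J}$ is a $(1,2)$-tensor field, it suffices to evaluate it on the local frame $\{X_i^H,\partial/\partial y^j\}$ of $T(TM)$, and the three displayed identities for $N_{\bar J}$ — valid precisely under the running hypotheses that $J$ is metallic with $J^2=pJ+qI$ and $\nabla$ is the Levi-Civita connection of $g$, which are exactly the hypotheses here — exhaust all the cases.

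Carrying this out: the purely vertical component $N_{\bar J}(\partial/\partial y^i,\partial/\partial y^j)$ is already identically zero. Since $(M,J,g)$ is locally metallic we have $\nabla J=0$, so the mixed component $N_{\bar J}(X_i^H,\partial/\partial y^j)=((\nabla_{JX_i}J)X_j-J(\nabla_{X_i}J)X_j)^k\,\partial/\partial y^k$ vanishes. For the horizontal–horizontal component, its $X_k^H$-part is $(N_J(X_i,X_j))^k$, which vanishes because $\nabla J=0$ forces $N_J=0$ (this is the Lemma of Section 2, read off directly from $N_J(X,Y)=(\nabla_{JX}J)Y-(\nabla_{JY}J)X+J(\nabla_Y J)X-J(\nabla_X J)Y$); and its vertical part is the expression $y^s(J^k_iJ^h_jR^r_{khs}-J^r_lJ^k_iR^l_{kjs}-J^h_jJ^r_lR^l_{ihs}+pJ^r_lR^l_{ijs}+qR^r_{ijs})$, every summand of which carries a factor of the curvature tensor $R$ of $\nabla$; since $(M,g)$ is flat, $R=0$ and this part vanishes too. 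Hence $N_{\bar J}=0$ on the whole frame, so $\bar J$ is integrable, and together with the earlier Proposition this shows $(\bar J,\bar g)$ is an integrable metallic pseudo-Riemannian structure on $TM$.

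The main obstacle is not in this concluding deduction, which is pure bookkeeping once the three Nijenhuis identities are available; it lies in establishing those identities in the first place — lifting the brackets $[X_i^H,X_j^H]$, $[X_i^H,\partial/\partial y^j]$ and $[\partial/\partial y^i,\partial/\partial y^j]$ via the structure equations of $\nabla$ (which is where the curvature terms enter the horizontal–horizontal case, and where the horizontal lift of $JX$ brings in the $(\nabla J)$ terms), applying $\bar J$ through its local expression, and simplifying with $J^2=pJ+qI$. Taking that computation as given, as the excerpt does, the proposition follows at once by imposing $\nabla J=0$ and $R=0$.
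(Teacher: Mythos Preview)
Your argument is correct and follows exactly the paper's approach: the paper simply records the three local expressions for $N_{\bar J}$ on the frame $\{X_i^H,\partial/\partial y^j\}$ and then states the proposition as an immediate consequence, which is precisely what you do by observing that $\nabla J=0$ kills the $(\nabla J)$- and $N_J$-terms while flatness kills the curvature terms.
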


\subsection{Metallic pseudo-Riemannian structures on the cotangent bundle}

Let $(M,g)$ be a pseudo-Riemannian manifold and let $\nabla$ be a linear connection on $M$. Then $\nabla$ defines the decomposition into the horizontal and vertical subbundles of $T(T^*M)$:
$$ T(T^*M)=T^H(T^*M)\oplus T^V(T^*M).$$

Let $\pi :T^*M \rightarrow M$ be the canonical projection and ${\pi}_*:T(T^*M) \rightarrow TM$ be the tangent map of $\pi$. If $a \in T^*M$ and $A \in T_a(T^*M)$, then ${\pi}_*(A) \in T_{\pi (a)}M$ and we denote by ${\chi}_a$ the standard identification between $T^*_{\pi (a)}M$ and its tangent space $ T_a(T^*_{\pi (a)}M)$.

Let ${\Phi}^{\nabla} :TM \oplus T^{*}M \rightarrow T(T^*M)$ be the bundle morphism defined by:
$${\Phi}^{\nabla}(X+\alpha):=X^H_a+{\chi}_a(\alpha),$$
where $a \in T^*M$ and $X^H_a$ is the horizontal lifting of $X\in T_{\pi (a)}M$.

Let $\left\{ x^{1},...,x^{n}\right\} $ be local coordinates on $M$, let $\left\{ {\tilde{x}}^{1},..., {\tilde{x}}^{n},y_1,...,y_n\right\} $ be respectively the corresponding local coordinates on $T^*M$ and let\\
$\{X_1,...,X_n, \dfrac{\partial }{\partial
{y_{1}}},.., \dfrac{\partial }{\partial
{y_{n}}}\}$ be a local frame on $T(T^*M)$, where $X_i=\dfrac{\partial }{\partial
{\tilde{x}}^{i}}$.

We have:
$${\Phi}^{\nabla}\left(\dfrac{\partial }{\partial
{x^{i}}}\right)=X_i^H$$
$${\Phi}^{\nabla}\left({dx^j}\right)=\dfrac{\partial }{\partial
{y_{j}}}.$$

Let $J$ be an arbitrary $g$-symmetric endomorphism on the tangent bundle. For any $p$ and $q$ real numbers, let
$$ \check {J}:=\begin{pmatrix} J & (-J^2+pJ+qI)\sharp_g \cr
\flat_g & -J^*+pI \cr
\end{pmatrix}$$
be the generalized metallic pseudo-Riemannian structure defined by $(J,g)$ with the pseudo-Riemannian metric $\check g$ defined by (\ref {560}).
The isomorphism ${\Phi}^{\nabla}$ allows us to construct a natural metallic structure $\tilde J$ and a natural pseudo-Riemannian metric $\tilde g$ on $T^*M$ in the following way.

We define $\tilde J:T(T^*M) \rightarrow T(T^*M)$ by
$${\tilde J}:= ({\Phi}^{\nabla} )\circ{\hat J} \circ ({\Phi}^{\nabla} )^{-1}$$
and the pseudo-Riemannian metric $\tilde g$ on $T^*M$ by
$${\tilde g}:= (({\Phi}^{\nabla} )^{-1})^{*}(\hat g). $$

\begin{proposition}
$(T^*M,{\tilde J},\tilde g)$ is a metallic pseudo-Riemannian manifold.
\end{proposition}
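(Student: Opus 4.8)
The plan is to argue by transport of structure along $\Phi^\nabla$, exactly as in the proof of the Proposition asserting that $(TM,\bar J,\bar g)$ is a metallic pseudo-Riemannian manifold. Write $(\check J,\check g)$ for the generalized metallic pseudo-Riemannian structure defined by $(J,g)$, so that $\tilde J=\Phi^\nabla\circ\check J\circ(\Phi^\nabla)^{-1}$ and $\tilde g=((\Phi^\nabla)^{-1})^{*}\check g$; by the theorem establishing that $(\check J,\check g)$ is a generalized metallic pseudo-Riemannian structure we already know that $\check J^2=p\check J+qI$, that $\check g$ is a pseudo-Riemannian metric and that $\check J$ is $\check g$-symmetric, and these are precisely the data that get pushed forward. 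The first thing I would do is check that $\Phi^\nabla$ is a smooth isomorphism of vector bundles over $T^*M$, from $\pi^{*}(TM\oplus T^*M)$ onto $T(T^*M)$: the connection $\nabla$ supplies the splitting $T(T^*M)=T^H(T^*M)\oplus T^V(T^*M)$, the horizontal lift $X\mapsto X^H_a$ maps $T_{\pi(a)}M$ isomorphically onto $T^H_a(T^*M)$ and $\chi_a$ maps $T^*_{\pi(a)}M$ isomorphically onto $T^V_a(T^*M)$; in the chosen local frame one has $\Phi^\nabla(\partial/\partial x^i)=X_i^H$ and $\Phi^\nabla(dx^j)=\partial/\partial y_j$, so the matrix of $\Phi^\nabla$ is the identity and hence $\Phi^\nabla$ and $(\Phi^\nabla)^{-1}$ are smooth pointwise linear isomorphisms. (Here $\check J$ and $\check g$ are understood as pulled back to $\pi^{*}(TM\oplus T^*M)$.)

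Granting this, the quadratic identity transfers by conjugation,
$$\tilde J^2=\Phi^\nabla\circ\check J^2\circ(\Phi^\nabla)^{-1}=\Phi^\nabla\circ(p\check J+qI)\circ(\Phi^\nabla)^{-1}=p\tilde J+qI,$$
and $\tilde J$ is a smooth $(1,1)$-tensor field on $T^*M$ since $\check J$ and $\Phi^\nabla$ are smooth. The tensor $\tilde g=((\Phi^\nabla)^{-1})^{*}\check g$ is symmetric, is non-degenerate because $\check g$ is non-degenerate and $\Phi^\nabla$ is a fibrewise linear isomorphism, and is smooth, so it is a pseudo-Riemannian metric on $T^*M$. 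Finally, for arbitrary $X,Y\in C^{\infty}(T(T^*M))$, write $X=\Phi^\nabla(\sigma)$ and $Y=\Phi^\nabla(\tau)$; then, using the $\check g$-symmetry of $\check J$,
$$\tilde g(\tilde J X,Y)=\check g(\check J\sigma,\tau)=\check g(\sigma,\check J\tau)=\tilde g(X,\tilde J Y),$$
so $\tilde J$ is $\tilde g$-symmetric. Hence $(\tilde J,\tilde g)$ is a metallic pseudo-Riemannian structure and $(T^*M,\tilde J,\tilde g)$ is a metallic pseudo-Riemannian manifold.

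I do not anticipate a genuine obstacle: the argument is purely formal. The one point that deserves some care is the verification that $\Phi^\nabla$ is a bona fide global vector bundle isomorphism, since it is this that guarantees that $\tilde J$ and $\tilde g$ are well-defined smooth tensor fields with $\tilde g$ non-degenerate. If, in parallel with the tangent-bundle case, one also wants explicit local expressions for $\tilde J$ and $\tilde g$, they are computed in exactly the same way as for $(TM,\bar J,\bar g)$, the only difference being that here $\Phi^\nabla(dx^j)=\partial/\partial y_j$ whereas $\Psi^\nabla(dx^j)=g^{jk}\partial/\partial y^k$; this alters the horizontal–vertical and vertical–vertical blocks of $\tilde J$ and $\tilde g$ accordingly, while the horizontal–horizontal block, governed by $J^k_i$ and $g_{ij}$, is unchanged.
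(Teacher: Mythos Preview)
Your proof is correct and follows essentially the same approach as the paper: both argue that the metallic relation and the $\tilde g$-symmetry of $\tilde J$ are inherited by transport of structure along the bundle isomorphism $\Phi^{\nabla}$ from the corresponding properties of the generalized structure on $TM\oplus T^*M$. The paper's proof is just the one-line observation that these identities follow from the definitions; you have simply spelled out the conjugation and pullback computations and taken extra care to note that $\Phi^{\nabla}$ is a smooth fibrewise isomorphism, which is exactly what underlies the paper's terse assertion.
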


\begin{proof}
From the definition it follows that ${\tilde J}^2=p{\tilde J}+qI$ and ${\tilde g}({\tilde J}X,Y)={\tilde g}(X,{\tilde J}Y)$, for any $X,Y \in C^{\infty}(T(T^*M))$.
\end{proof}

In local coordinates, we have the following expressions for ${\tilde J}$ and $\tilde g$:
$$
\left \{
\begin{array}{l}
\vspace{0.2cm}
{\tilde J} \left(  X_i^H\right)=J^k_iX_k^H+g_{ik}{\dfrac{\partial }{\partial
y_{k}}}\\
{\tilde J} \left({\dfrac{\partial }{\partial
y_{j}}}\right)=(-J^2+pJ+qI)^l_kg^{jk}{\dfrac{\partial }{\partial
y_{l}}}-J^j_k{\dfrac{\partial }{\partial
y_{k}}}+p{\dfrac{\partial }{\partial
y_{j}}}
\end{array}\label{190}
\right.
$$
and
$$
\left \{
\begin{array}{l}
\vspace{0.2cm}
{\tilde g} \left(  X_i^H,X^H_j\right)=g_{ij}\\
\vspace{0.2cm}
{\tilde g} \left(  X_i^H,{\dfrac{\partial }{\partial
y_{j}}}\right )=\frac{p}{4} {\delta}_{ij}-\frac{1}{2} J^j_i\\
{\tilde g} \left({\dfrac{\partial }{\partial
y_{i}}},{\dfrac{\partial }{\partial
y_{j}}}\right)=g^{ij}.
\end{array}\label{200}
\right.
$$

Computing the Nijenhuis tensor of ${\tilde J}$, in the case when $J$ is metallic with $J^2=pJ+qI$ and $\nabla$ is the Levi-Civita connection of $g$, we get:
$$
\begin{array}{l}
\vspace{0.2cm}
N_{{\tilde J}}\left({\dfrac{\partial }{\partial y_{i}}},{\dfrac{\partial }{\partial y_{j}}}\right)=0 \\
\end{array}\label{210}
$$
$$
\begin{array}{l}
\vspace{0.2cm}
N_{{\tilde J}} \left( X^H_{i},{\dfrac{\partial }{\partial y_{j}}}\right)={\left( \left( {\nabla}_{JX_{i}} J \right) X_{k}-J\left( {\nabla}_{X_{i}} J \right){ X_{k}}\right)}^j {\dfrac{\partial }{\partial y_{k}}}\\
\end{array} \label{220}
$$
$$
\begin{array}{l}
\vspace{0.2cm}
N_{{\tilde J}}\left(  X_i^H, X_j^H \right)=(N_J\left( {X_{i}},X_j\right))^kX_k^H +\\
+ y_l \left(J^k_iJ^h_jR^l_{khs} -J_s^rJ^k_iR^l_{kjr}-J^r_sJ^k_jR^l_{ikr}+pJ^k_sR^l_{ijk}+qR^l_{ijs}\right) {{\dfrac{\partial }{\partial y_{s}}}}. \\
\end{array}\label{230}
$$

Therefore we can state the following:

\begin{proposition} Let $(M,J,g)$ be a flat locally metallic pseudo-Riemannian manifold. If $\nabla$ is the Levi-Civita connection of $g$, then $({\tilde J},\tilde g)$ is an integrable metallic pseudo-Riemannian structure on $T^*M$.
\end{proposition}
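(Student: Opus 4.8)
The plan is to invoke the preceding proposition, which already establishes that $(T^*M,\tilde J,\tilde g)$ is a metallic pseudo-Riemannian manifold (i.e.\ $\tilde J^2=p\tilde J+qI$ and $\tilde g$ is $\tilde J$-symmetric). Hence the only thing left to prove is integrability, namely that the Nijenhuis tensor $N_{\tilde J}$ vanishes. Since $N_{\tilde J}$ is a tensor field, it suffices to check it on the local frame $\{X_1^H,\dots,X_n^H,\partial/\partial y_1,\dots,\partial/\partial y_n\}$ of $T(T^*M)$, and for this I would use the three explicit formulas for the components of $N_{\tilde J}$ displayed just above the statement, which were computed precisely under the standing hypotheses that $J$ is metallic with $J^2=pJ+qI$ and that $\nabla$ is the Levi-Civita connection of $g$ — exactly the situation here.

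First I would treat the purely vertical component: the formula gives $N_{\tilde J}\!\left(\partial/\partial y_i,\partial/\partial y_j\right)=0$ with no further hypotheses needed. Next, for the mixed horizontal–vertical component, the formula expresses $N_{\tilde J}\!\left(X_i^H,\partial/\partial y_j\right)$ in terms of $(\nabla_{JX_i}J)X_k$ and $J(\nabla_{X_i}J)X_k$; since $(M,J,g)$ is locally metallic we have $\nabla J=0$, so both summands vanish and this component is zero. Finally, for the horizontal–horizontal component, the formula has two pieces: a term proportional to $N_J(X_i,X_j)$ and a term built from the Riemann curvature coefficients $R^l_{\bullet\bullet\bullet}$ of $\nabla$. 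The first piece vanishes because a locally metallic manifold has integrable $J$ (by the Lemma in Section~2), so $N_J=0$; the second piece vanishes because $M$ is flat, so $R=0$. Therefore all three components vanish and $N_{\tilde J}=0$.

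There is essentially no analytic obstacle here: the computational heart of the argument — deriving the coordinate expressions for $N_{\tilde J}$ — has already been carried out, so the proof reduces to observing that the two hypotheses ``flat'' and ``locally metallic'' are exactly what is needed to annihilate, respectively, the curvature term and the covariant-derivative terms, together with the already-known consequence that locally metallic implies $N_J=0$. The only point deserving a word of care is to note explicitly that the displayed formulas for $N_{\tilde J}$ presuppose both that $J$ satisfies $J^2=pJ+qI$ and that $\nabla$ is the Levi-Civita connection, which matches the hypotheses of the statement, so the formulas are legitimately applicable. Concluding, $(\tilde J,\tilde g)$ is an integrable metallic pseudo-Riemannian structure on $T^*M$.
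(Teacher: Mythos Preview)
Your proposal is correct and follows exactly the paper's approach: the paper displays the three components of $N_{\tilde J}$ in the adapted frame and then immediately states the proposition, the implicit argument being precisely the one you spell out---the vertical--vertical component is identically zero, the mixed component vanishes because $\nabla J=0$, and the horizontal--horizontal component vanishes because $N_J=0$ (locally metallic $\Rightarrow$ integrable) and $R=0$ (flatness).
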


\begin{remark}
The metallic structures ${\bar J}$ and ${\tilde J}$ on the tangent and cotangent bundles respectively, satisfy:
$${\bar J} \circ ({\Psi}^{\nabla}\circ ({\Phi}^{\nabla})^{-1})=({\Psi}^{\nabla}\circ ({\Phi}^{\nabla})^{-1})\circ {\tilde J}.$$
\end{remark}

\bigskip
\begin{center}

\end{center}

\bigskip
\bigskip
\begin{minipage}[t]{10cm}
\begin{flushleft}
\small{

\textsc{Adara M. Blaga}
\\{West University of Timi\c{s}oara}
\\{Department of Mathematics}
\\{Bld. V. P\^{a}rvan nr. 4, 300223, Timi\c{s}oara, Rom\^{a}nia}
\\{adarablaga@yahoo.com}
\\[0.4cm]
\textsc{Antonella Nannicini}
\\{University of Florence}
\\{Department of Mathematics and Informatics "U. Dini"}
\\{Viale Morgagni, 67/a, 50134, Firenze, Italy}
\\{antonella.nannicini@unifi.it}
}
\end{flushleft}
\end{minipage}

\end{document}